\newtheorem{Th}{Theorem}[section]
\newtheorem{Prop}[Th]{Proposition}
\newtheorem{Lem}[Th]{Lemma}
\newtheorem{Rem}[Th]{Remark}
\newcommand{\wt}{\widetilde}
   \newcommand{\vp}{\varphi}
   \newcommand{\eps}{\varepsilon}
   \def\div{\mathop{\mathrm{div}\,}}
   \def\essinf{\mathop{\mathrm{ess\, inf}\,}}
   \def\id{\mathrm{id}}
   \def\Z{\mathbb{Z}}
   \def\N{\mathbb{N}}
   \def\R{\mathbb{R}}
   \def\curl{\mathrm{curl}}
   \def\dim{\mathrm{dim}}
   \def\V{\mathcal{V}}
   \def\J{\mathcal{J}}
   \def\W{\mathcal{W}}
   \def\D{\mathcal{D}}
   \DeclareMathOperator*{\esssup}{sup}
\newcommand{\cB}{{\mathcal B}}
\newcommand{\cC}{{\mathcal C}}
\newcommand{\cD}{{\mathcal D}}
\newcommand{\cE}{{\mathcal E}}
\newcommand{\cH}{{\mathcal H}}
\newcommand{\cJ}{{\mathcal J}}
\newcommand{\cL}{{\mathcal L}}
\newcommand{\cM}{{\mathcal M}}
\newcommand{\cN}{{\mathcal N}}
\newcommand{\cO}{{\mathcal O}}
\newcommand{\cP}{{\mathcal P}}
\newcommand{\cT}{{\mathcal T}}
\newcommand{\cV}{{\mathcal V}}
\newcommand{\cW}{{\mathcal W}}
\newcommand{\fJ}{J}
\renewcommand{\dim}{{\rm dim}\,}
\newcommand{\al}{\alpha}
\newcommand{\be}{\beta}
\newcommand{\ga}{\gamma}
\newcommand{\la}{\lambda}
\newcommand{\si}{\sigma}
\newcommand{\om}{\omega}
\newcommand{\Ga}{\Gamma}
\newcommand{\Om}{\Omega}
\def\curlop{\nabla\times}
\newcommand{\weakto}{\rightharpoonup}
\newcommand{\pa}{\partial}
\def\id{\mathrm{id}}
\newcommand{\tX}{\widetilde{X}}
\newcommand{\tu}{\widetilde{u}}
\newcommand{\tv}{\widetilde{v}}
\newcommand{\tcV}{\widetilde{\cV}}
\newcommand{\cTto}{\stackrel{\cT}{\longrightarrow}}
\numberwithin{equation}{section}
\begin{document}
\title{Nonlinear time-harmonic Maxwell equations in domains}
\author{Thomas Bartsch \and Jaros\l aw Mederski
\footnote{The author was partially supported by the National Science Centre, Poland (Grant No. 2014/15/D/ST1/03638).}
}

\date{}
\maketitle

\begin{center}
  {\it Dedicated to Paul Rabinowitz.}
\end{center}

\begin{abstract}
  The search for time-harmonic solutions of nonlinear Maxwell equations in the absence of charges and currents leads to the elliptic equation
  \[
    \curlop\left(\mu(x)^{-1}\curlop u\right) - \om^2\eps(x)u = f(x,u)
  \]
  for the field $u:\Om\to\R^3$ in a domain $\Om\subset\R^3$. Here $\eps(x) \in \R^{3\times3}$ is the (linear) permittivity tensor of the material, and $\mu(x) \in \R^{3\times3}$ denotes the magnetic permeability tensor. The nonlinearity $f:\Om\times\R^3\to\R^3$ comes from the nonlinear polarization. If $f=\nabla_uF$ is a gradient then this equation has a variational structure. The goal of this paper is to give an introduction to the problem and the variational approach, and to survey recent results on ground and bound state solutions. It also contains refinements of known results and some new results.
\end{abstract}

{\bf MSC 2010:} Primary: 35Q60; Secondary: 35J20, 58E05, 78A25

{\bf Key words:} time-harmonic Maxwell equations, perfect conductor, anisotropic media, uniaxial media, nonlinear material, ground state, variational methods, strongly indefinite functional, Nehari-Pankov manifold

\section{Introduction}\label{sec:intro}

The propagation of electromagnetic waves is described by the Maxwell equations for the electric field $\cE$, the electric displacement field $\cD$, the magnetic field $\cH$, and the magnetic induction $\cB$. These are time-dependent vector fields in a domain $\Om\subset\R^3$. Given the current intensity $\cJ$ and the scalar charge density $\rho$, the Maxwell equations in differential form are as follows:
\begin{equation}\label{eq:Maxwell}
  \left\{
  \begin{aligned}
    &\pa_t \cB + \curlop \cE=0 &&\quad\hbox{(Faraday's Law)}\\
    &\curlop \cH = \cJ+ \pa_t \cD &&\quad\hbox{(Ampere's Law)}\\
    &\div(\cD)=\rho &&\quad\hbox{(Gauss' Electric Law)}\\
    &\div(\cB)=0 &&\quad\hbox{(Gauss' Magnetic Law)}.
  \end{aligned}
  \right.
\end{equation}
These fields are related by constitutive equations determined by the material. The relation between the electric displacement field and the electric field is given by $\cD = \eps\cE +\cP_{NL}(x,\cE)$ where $\eps = \eps(x) \in \R^{3\times3}$ is the (linear) permittivity tensor of the material, and $\cP_{NL}$ is the nonlinear part of the polarization. The relation between magnetic field and magnetic induction is $\cB=\mu\cH-\cM$ where $\mu = \mu(x) \in \R^{3\times3}$ denotes the magnetic permeability tensor and $\cM$ the magnetization of the material. The tensors $\eps, \mu$ are symmetric and positive definite. If the material is isotropic then they are scalar, and if the medium is homogeneous they are constant. In a linear medium one has $\cP_{NL}=0$ leading to the linear Maxwell equations.

Suppose there are no currents, charges nor magnetization, i.e.\ $\cJ=0$, $\rho=0$, $\cM=0$. Then multiplying Faraday's law with $\mu^{-1}$, taking the curl and using the constitutive relations and Ampere's law leads to the nonlinear wave equation
\begin{equation}\label{eq:wave}
  \eps(x)\pa_t^2\cE + \pa_t^2\cP_{NL}(x,\cE) + \curlop (\mu(x)^{-1}\curlop \cE) = 0
\end{equation}
for the electric field $\cE$. Solving this one obtains $\cD = \eps\cE +\cP_{NL}(x,\cE)$ by the constitutive relation and $\cB$ by time integrating Faraday's law. Finally $\cH = \mu^{-1}\cB$ is also determined by the constitutive relation. The fields $\cB$ and $\cD$ will automatically be divergence free provided they are divergence free at time $0$.

The field $\cE$ is said to be time-harmonic (also monochromatic) with frequency $\om>0$ if
\[
  \cE(x,t) = E_1(x)\cos(\om t) + E_2(x)\sin(\om t)\quad\text{for $x\in\Om$ and $t\in\R$.}
\]
The intensity of a time-harmonic field $\cE$ is defined as the time-average
\[
  \frac{1}{T}\int_{0}^{T}|\cE(x,t)|^2dt = |E_1(x)|^2+|E_2(x)|^2
\]
of $|\cE(x,t)|^2$; here $T=2\pi/\om$. Now suppose that the nonlinear polarization is of the form
\[
  \cP_{NL}(x,\cE) = \chi\big(x,|E_1(x)|^2+|E_2(x)|^2\big)\cE
\]
i.e.\ the scalar susceptibility $\chi$ depends only on the intensity of $\cE$. Then \eqref{eq:wave} reduces to the system
\begin{equation}\label{eq:system}
  \left\{
  \begin{aligned}
    \curlop(\mu(x)^{-1}\curlop E_1) - V(x) E_1 &= \chi\big(x,|E_1(x)|^2+|E_2(x)|^2\big)E_1 &&\quad\text{in $\Om$,}\\
    \curlop(\mu(x)^{-1}\curlop E_2) - V(x) E_2 &= \chi\big(x,|E_1(x)|^2+|E_2(x)|^2\big)E_2 &&\quad\text{in $\Om$,}
  \end{aligned}
  \right.
\end{equation}
where  $V(x) = \om^2\eps(x) \in \R^{3\times3}$. Looking for semitrivial solutions where one of $E_1,E_2$ is trivial or where $E_1=E_2$ one is lead to the equation
\begin{equation}\label{eq:main}
  \curlop\left(\mu(x)^{-1}\curlop u\right) - V(x)u = f(x,u) := \chi\big(x,|u|^2\big)u \qquad\text{in $\Om$.}
\end{equation}
Observe that the nonlinearity is a gradient: $f(x,u) = \nabla_u F(x,u)$ with $F(x,u) = \frac12 \psi\big(x,|u|^2\big)$ where $\psi(x,s) = \int_{0}^{s}\chi(x,r)dr$. Let us mention already at this point a major difficulty when dealing with this equation. If $u=\nabla \phi$ is a gradient then $\curlop u=0$, hence the differential operator in \eqref{eq:main} has an infinite-dimensional kernel. This feature is of course already present in the linear Maxwell equations. In order to get around this one uses the Helmholtz decomposition $u=v+w$ with a divergence-free field $v$ and a curl-free field $w$. At first sight this does not seem to be very helpful in the nonlinear setting. We shall see that however that a nonlinear variation of this idea does help.

Probably the most common type of nonlinearity in the physics and engineering literature is the Kerr nonlinearity
\begin{equation}\label{ex:Kerr}
  f(x,u) = \chi^{(3)}(x)|u|^2u.
\end{equation}
Other examples for $f$ that appear in applications are nonlinearities with saturation like
\begin{equation}\label{ex:sat}
  f(x,u) = \chi^{(3)}(x)\frac{|u|^2}{1+|u|^2}u,
\end{equation}
or cubic-quintic nonlinearities like
\begin{equation}\label{ex:cub-quint}
  f(x,u) = \chi^{(3)}(x)|u|^2u - \chi^{(5)}(x)|u|^4u.
\end{equation}
We refer the reader to \cite{Nie,Stuart:1991,Stuart:1993} for these and further examples.

When $\Om$ has a boundary then boundary conditions depend of course on the material characteristics of the complement $\R^3\setminus\overline\Om$. In this survey we shall only consider the case of $\Om$ being surrounded by a perfectly conducting medium which leads to the so-called metallic boundary condition
\begin{equation}\label{eq:bc}
  \nu\times u = 0\qquad\text{on $\pa\Om$}
\end{equation}
where $\nu:\pa\Om\to\R^3$ is the exterior normal.

Solutions of \eqref{eq:main} are critical points of the functional
\begin{equation}\label{eq:action}
  J(u) = \frac12\int_\Om\langle\mu(x)^{-1}\curlop u,\curlop u\rangle\, dx
            - \frac12\int_\Om \langle V(x)u, u\rangle\, dx - \int_\Om F(x,u)\,dx
\end{equation}
defined on an appropriate subspace $X$ of $H_0(\curl;\Om)$ such that $F(x,u)$ and $\langle V(x)u, u\rangle$ are integrable. The precise definition of the domain of $\fJ$ will be given in Section~\ref{sec:var}. The above mentioned difficulty that the curl operator $\curlop$ has an infinite-dimensional kernel is of course also present in the variational approach. One of the consequences is that the functional is strongly indefinite, i.e.\ Morse indices of critical points will be infinite. Another consequence is that the Palais-Smale condition does not hold. And a third difficulty is that the derivative $J':X\to X^*$ is not weak-to-weak$^*$ continuous even when the growth of $F$ is subcritical. Thus even if $\fJ$ has a linking geometry in the spirit of Benci and Rabinowitz \cite{BenciRabinowitz}, the problem cannot be treated by standard variational methods for strongly indefinite functionals as in \cite{BenciRabinowitz,BartschDing,DingBook,MederskiSystem}.

In the literature there are only few results about nonlinear equations like \eqref{eq:main} involving the curl-curl operator. If $\Om=\R^3$ then Benci and Fortunato \cite{BenFor} proposed, within a unified field theory for classical electrodynamics, the equation
\begin{equation}\label{eq:BF}
  \curlop\curlop A = W'(|A|)A
\end{equation}
for the gauge potential $A$ related to the magnetic field $H=\curlop A$. Azzollini et al.\ \cite{BenForAzzAprile} and D'Aprile and Siciliano \cite{DAprileSiciliano} used the cylindrically symmetry of the domain $\R^3$ and of \eqref{eq:BF} in order to find special types of symmetric solutions. Cylindrically symmetric media have also been considered in the work of Stuart and Zhou \cite{Stuart:1991}--\cite{StuartZhou10} on transverse electric and transverse magnetic solutions of \eqref{eq:Maxwell}. The search for these solutions reduces to a one-dimensional variational problem or an ODE, which simplifies the problem considerably. The methods from \cite{Stuart:1991}--\cite{StuartZhou10}
seem to be insufficient to study our problem \eqref{eq:main}. Only very recently new variational methods have been developed that yield critical points of $\fJ$, hence solutions of \eqref{eq:main}. In this survey we present the basic ideas and some of these recent results.

Finally we would like to mention that linear time-harmonic Maxwell equations have been extensively studied by means of numerical and analytical methods, on bounded and unbounded (exterior) domains; see e.g.\ \cite{Ball2012,BuffaAmmariNed,Leis68,Picard01,KirschHettlich,Monk,Doerfler} and the references therein.

The paper is organized as follows. In Section~\ref{sec:var} we discuss the variational approach to \eqref{eq:main} for bounded domains. Then in Section~\ref{sec:sym} we present a theorem in a simpler symmetric setting where standard methods from critical point theory can be applied, which can be found in the seminal paper \cite{Ambrosetti-Rabinowitz:1973} by Ambrosetti and Rabinowitz and the book \cite{Rabinowitz:1986} by Rabinowitz. In order to treat the full strongly indefinite functional we recall some critical point theory developed for \eqref{eq:main}. Then in Section~\ref{sec:res-bounded} we discuss results from \cite{BartschMederski1, BartschMederski2, Qin-Tang:2016, Tang-Qin:2016} about \eqref{eq:main} on bounded domains. The case $\Om=\R^3$ will be discussed in Section~\ref{sec:R^3}. Here we present results from \cite{BDPR:2016, BenForAzzAprile, DAprileSiciliano, HirschReichel, MederskiENZ}. Finally in Section~\ref{sec:problems} we list some open problems.

\section{Variational approach for bounded domains}\label{sec:var}

Throughout the paper we assume that $\Om\subset\R^3$ is bounded or $\Om=\R^3$. In this section we discuss the bounded domain case where we require:
\begin{itemize}
\item[(L1)] $\Om\subset\R^3$ is a bounded domain with Lipschitz boundary. The tensor fields $\mu,V\in L^{\infty}(\Om,\R^{3\times 3})$ satisfy: $\mu(x),V(x)$ are symmetric and positive definite uniformly for $x\in\Om$.
\end{itemize}
Now we define the basic spaces in which we look for solutions of \eqref{eq:main}. The space $L^2_V(\Om,\R^3)$ consists of all measurable vector fields $u:\Om\to\R^3$ such that that $\langle V(x)u,u\rangle\in L^1(\Om)$. This is a Hilbert space with scalar product
\[
  \langle u_1,u_2\rangle_V = \int_\Om \langle V(x)u_1,u_2\rangle dx
\]
and associated norm $|\,.\,|_V$. Clearly, (L1) implies that $L^2_V(\Om,\R^3) = L^2(\Om,\R^3)$, with equivalent norms. The Hilbert space $H_{V,0}(\curl;\Om)$ is by definition the completion of $\cC^{\infty}_0(\Om,\R^3)$ with respect to the norm
\[
  \|u\|_{H_V(\curl;\Om)} := \left(|\curlop u|^2_2+|u|_V^2\right)^{1/2}.
\]
Here $\curlop u$ has to be understood in the distributional sense, and $|\cdot|_q$ denotes the $L^q$-norm. Setting
\[
  \langle \curlop u_1,\curlop u_2\rangle_{\mu^{-1}} = \int_\Om \langle \mu(x)^{-1}\curlop u_1,\curlop u_2\rangle dx
\]
with associated semi-norm $|\,.\,|_{\mu^{-1}}$ assumption (L1) implies that $\|u\|_{H_V(\curl;\Om)}$ is equivalent to the norm
\[
  \|u\|_{\mu,V} := \left(|\curlop u|_{\mu^{-1}}^2 + |u|_V^2\right)^{1/2}.
\]
Also by (L1) the space $H_{V,0}(\curl;\Om)$ is equivalent to the space denoted $H_0(\curl;\Om)$ in the literature. Observe that elements of $H_{V,0}(\curl;\Om)$ need not be zero on the boundary. In fact, for $u\in H^1_0(\Om)$ we claim that $\nabla u\in H_{V,0}(\curl;\Om)$. There exists $\phi_n\in \cC^{\infty}_0(\Om)$ converging towards $u$ in $H^1(\Om)$ and such that $\nabla\phi_n$ converges towards $\nabla u$ in $L^2_V(\Om,\R^3)$. Then $\nabla\phi_n$ converges towards $\nabla u$ in $H_V(\curl;\Om)$ because the curl of gradient fields is $0$. Vector fields $u\in H_{V,0}(\curl;\Om)$ satisfy the boundary condition $\nu\times u=0$ on $\pa\Om$ in the weak sense.

Next we discuss the Helmholtz decomposition. The space
\[
  \cV_0 = \left\{v\in H_{V,0}(\curl;\Om): \int_\Om\langle V(x)v,\phi\rangle\,dx=0
          \text{ for every $\phi\in \cC^\infty_0(\Om,\R^3)$ with $\curlop\phi=0$} \right\}
\]
consists of vector fields $v\in H_{V,0}(\curl;\Om)$ such that $V(x)v$ is divergence-free in the distributional sense. The space
\[
  \cW_0
   = \left\{w\in H_{V,0}(\curl;\Om):\int_\Om\langle w,\curlop\phi\rangle = 0
     \text{ for all }\phi\in\cC^\infty_0(\Om,\R^3)\right\}
\]
consists of curl-free vector fields in $H_{V,0}(\curl;\Om)$, in the distributional sense. Since for every $\phi\in\cC^\infty_0(\Om;\R^3)$ the linear map
\[
  u \mapsto \int_\Om \langle u,\curlop\varphi\rangle dx
\]
is continuous on $H_{V,0}(\curl;\Om)$, the space $\cW_0$ is a closed complement of $\cV_0$ in $H_{V,0}(\curl;\Om)$, hence there is a Helmholtz type decomposition
\begin{equation}\label{eq:Helmholtz}
  H_{V,0}(\curl;\Om) = \cV_0\oplus\cW_0.
\end{equation}
Therefore any $u\in H_{V,0}(\curl;\Om)$ can be decomposed as $u=v+w$ with $v\in\cV_0$ and $w\in\cW_0$, where $V(x)v$ is divergence-free and $w$ is curl-free.

\begin{Lem}\label{lem:evp}
  The curl-curl source eigenvalue problem
  \begin{equation}\label{EgEigenvalue}
    \left\{
    \begin{aligned}
      &\curlop(\mu(x)^{-1}\curlop v) = \la V(x)v &&\quad \hbox{in } \Om,\\
      &\nu\times v =0 &&\quad \hbox{on } \pa\Om\\
      &v\in\cV_0
    \end{aligned}
    \right.
  \end{equation}
  has a discrete sequence $0<\la_1<\la_2<\la_3<\ldots$ of (anisotropic) Maxwell eigenvalues with eigenspaces of finite multiplicity. The quadratic form $Q:\cV_0\to\R$ defined by
  \begin{equation}\label{eq:DefofQ}
    Q(v) := \int_\Om \left(\langle\mu(x)^{-1}\curlop v,\curlop v\rangle - \langle V(x)v,v\rangle\right)\,dx,
  \end{equation}
  is positive definite on the sum $\cV^+\subset\cV_0$ of the eigenspaces associated to the eigenvalues $\la_k>1$ and it is negative semi-definite on the sum $\tcV\subset\cV_0$ of the eigenspaces associated to the eigenvalues $\la_k\le1$.
\end{Lem}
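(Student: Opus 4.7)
The plan is to realize the eigenvalue problem as the spectral problem for a compact self-adjoint operator on the Hilbert space $\cV_0$ and then read off the sign of $Q$ from the spectral decomposition.

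First I would equip $\cV_0$ with the bilinear form $a(v_1,v_2):=\langle\curlop v_1,\curlop v_2\rangle_{\mu^{-1}}$. The crucial analytic input I rely on is the compact embedding $\cV_0\hookrightarrow L^2_V(\Om,\R^3)$, which, under (L1) and the Lipschitz assumption on $\pa\Om$, is a classical Weber/Weck-type result for divergence-free fields with metallic boundary condition. Granting this embedding, one obtains a Poincaré-type inequality $|v|_V\le C|\curlop v|_{\mu^{-1}}$ on $\cV_0$: otherwise a sequence $v_n\in\cV_0$ with $|v_n|_V=1$ and $|\curlop v_n|_{\mu^{-1}}\to 0$ would, along a subsequence, converge in $L^2_V$ to some $v$ with $|v|_V=1$ and $\curlop v=0$; but then $v\in\cW_0\cap\cV_0=\{0\}$ by the Helmholtz decomposition \eqref{eq:Helmholtz}, a contradiction. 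Consequently $a$ is an inner product on $\cV_0$ equivalent to the norm $\|\cdot\|_{\mu,V}$, making $(\cV_0,a)$ a Hilbert space.

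Next I would define $T:\cV_0\to\cV_0$ by
\[
  a(Tv,\phi)=\int_\Om\langle V(x)v,\phi\rangle\,dx\qquad\text{for all }\phi\in\cV_0,
\]
which makes sense by Riesz representation. Since the right-hand side factors through the compact embedding $\cV_0\hookrightarrow L^2_V$ and then through the bounded inclusion $L^2_V\hookrightarrow\cV_0^*$ (followed by Riesz), the operator $T$ is compact; it is self-adjoint (by symmetry of $V$) and positive (since $\langle V\cdot,\cdot\rangle$ is positive definite). By the spectral theorem, $T$ has a discrete sequence of positive eigenvalues $\mu_k\searrow 0$ with finite-dimensional eigenspaces, orthogonal both with respect to $a$ and with respect to $\langle\cdot,\cdot\rangle_V$. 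Unwinding definitions, $\mu_k=1/\la_k$, so $0<\la_1<\la_2<\cdots\to\infty$ are exactly the Maxwell eigenvalues; a weak formulation argument, testing \eqref{EgEigenvalue} against $\phi\in\cC^\infty_0(\Om,\R^3)$ and then against elements of $\cV_0$, shows that the eigenvalue problem in the strong/weak sense is equivalent to the equation $Tv=\mu v$ in $\cV_0$.

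For the quadratic form $Q$, any $v\in\cV_0$ decomposes as $v=\sum_k v_k$ with $v_k$ in the $\la_k$-eigenspace, the sum being orthogonal with respect to both $a$ and $\langle\cdot,\cdot\rangle_V$. Then
\[
  Q(v)=\sum_k\bigl(\langle\mu^{-1}\curlop v_k,\curlop v_k\rangle_{L^2}-\langle V v_k,v_k\rangle_{L^2}\bigr)
       =\sum_k(\la_k-1)\int_\Om\langle V(x)v_k,v_k\rangle\,dx,
\]
which is strictly positive whenever $v\in\cV^+\setminus\{0\}$ (every term has $\la_k>1$ and at least one $v_k\neq 0$) and is $\le 0$ on $\tcV$ (all terms have $\la_k\le 1$). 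The main obstacle — and really the only nontrivial point — is the compact embedding $\cV_0\hookrightarrow L^2_V$; once this is cited, everything else is a direct application of the spectral theorem for compact self-adjoint operators together with the Helmholtz decomposition already established.
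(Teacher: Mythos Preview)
Your proof is correct and follows exactly the approach the paper has in mind: the paper's own argument consists of a single sentence citing the compact embedding $\cV_0\hookrightarrow L^2_V(\Om,\R^3)$ from \cite{Bauer-Pauly-Schomburg:2016} and then declaring that ``the lemma follows immediately.'' You have simply unpacked that ``immediately'' into the standard compact self-adjoint spectral argument, and correctly identified the compact embedding as the only nontrivial input.
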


\begin{proof}
  By \cite[Theorem~4.7]{Bauer-Pauly-Schomburg:2016} the space $\cV_0$ embeds compactly into $L^2_V(\Om,\R^3)$. The lemma follows immediately.
\end{proof}

Under slightly more rigorous regularity conditions $\cV_0$ embeds even compactly into $L^p_V(\Om,\R^3)$ for $2\le p<6$, see \cite[Proposition~3.1]{BartschMederski2}, but the space $H_{V,0}(\curl;\Om)$ does not embed into $L^p(\Om,\R^3)$ for $p>2$. Therefore the functional $J$ from \eqref{eq:action} will in general only be defined on a smaller subspace $X\subset H_{V,0}(\curl;\Om)$ that depends on the nonlinearity $F$. Our basic conditions on $F$ are as follows.
\begin{itemize}
\item[(F1)] $F:\Om\times\R^3\to\R$ is differentiable with respect to $u\in\R^3$, such that $f=\nabla_uF:\Om\times\R^3\to\R^3$ is a Carath\'eodory function (i.e.\ measurable in $x\in\Om$, continuous in $u\in\R^3$ for a.e.\ $x\in\Om$). Moreover, $F(x,0)=0$ for a.e.\ $x\in\Om$.
\item[(F2)] $|f(x,u)|=o(|u|)$ as $u\to0$ uniformly in $x\in\Om$.
\item[(F3)] There exist $2<p<6$ and $c>0$ such that
    \[
      |f(x,u)|\le c(1+|u|^{p-1})\qquad\text{for all } x\in\Om, u\in\R^3.
    \]
\end{itemize}

Then $J(u)$ is defined for $u \in X := H_{V,0}(\curl;\Om)\cap L^p(\Om,\R^3)$. Recall that $p=6$ is the critical Sobolev exponent in dimension $3$ so that we require subcritical growth in (F3). Curl-curl equations with critical growth have been treated only very recently (see \cite{MederskiMaxwellCritical, Zeng:2016} and will not be treated here.

Observe that the Helmholtz decomposition \eqref{eq:Helmholtz} induces a Helmholtz decomposition of $X=\cV\oplus\cW$ where $\cV:=\cV_0\cap X$ and $\cW:=\cW_0\cap X$. Now we can formulate the variational nature of \eqref{eq:main}; see \cite{BartschMederski1,BartschMederski2}.

\begin{Prop}\label{PropSolutE}
The functional $J:X = H_{V,0}(\curl;\Om)\cap L^p(\Om,\R^3)\to\R$ given by \eqref{eq:action} is of class $\cC^1$. Moreover $u\in X$ is a critical point of $J$ if and only if $u$ is a (weak) solution of \eqref{eq:main}.
\end{Prop}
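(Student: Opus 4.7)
The plan is to decompose $J = \frac12 Q - \Psi$, where $Q(u) = |\curlop u|_{\mu^{-1}}^2 - |u|_V^2$ and $\Psi(u) = \int_\Om F(x,u)\,dx$, and to establish $\cC^1$ regularity of each piece separately. For the quadratic part, assumption (L1) gives that $\mu(x)^{-1}$ and $V(x)$ are bounded, symmetric and positive definite uniformly in $x$, so the two bilinear forms
\[
(u,\vp) \mapsto \int_\Om\langle \mu(x)^{-1}\curlop u,\curlop\vp\rangle\,dx
\quad\text{and}\quad
(u,\vp) \mapsto \int_\Om\langle V(x)u,\vp\rangle\,dx
\]
are continuous on $H_{V,0}(\curl;\Om)$. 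Since the embedding $X \hookrightarrow H_{V,0}(\curl;\Om)$ is continuous, $Q/2$ is smooth on $X$ with derivative the sum of these two forms.

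The core of the argument is the $\cC^1$ regularity of $\Psi$. Combining (F2) with (F3) yields, for every $\eta>0$, a constant $C_\eta>0$ with
\[
|f(x,u)|\le \eta|u|+C_\eta|u|^{p-1}
\quad\text{and}\quad
|F(x,u)|\le \tfrac{\eta}{2}|u|^2+\tfrac{C_\eta}{p}|u|^p.
\]
In particular $\Psi(u)$ is finite on $L^2(\Om,\R^3)\cap L^p(\Om,\R^3)\supset X$. Let $p'=p/(p-1)$. Via (F1) together with the bound above, the Nemytskii operator $u\mapsto f(\cdot,u)$ is continuous from $L^2\cap L^p$ into $L^2+L^{p'}\subset (L^2\cap L^p)^*$. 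Applying the fundamental theorem of calculus to $t\mapsto F(x,u+t\vp)$ pointwise and a Vitali-type convergence argument, one concludes that $\Psi$ is Fr\'echet differentiable on $X$ with
\[
\Psi'(u)[\vp]=\int_\Om\langle f(x,u),\vp\rangle\,dx,
\]
and continuity of $\Psi'$ follows from continuity of the Nemytskii operator.

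Putting the pieces together yields $J\in\cC^1(X,\R)$ with
\[
J'(u)[\vp]
=\int_\Om\langle\mu(x)^{-1}\curlop u,\curlop\vp\rangle\,dx
-\int_\Om\langle V(x)u,\vp\rangle\,dx
-\int_\Om\langle f(x,u),\vp\rangle\,dx
\qquad\text{for every $\vp\in X$.}
\]
Hence $u\in X$ is a critical point of $J$ if and only if this integral identity holds for all $\vp\in X$, which is exactly the weak formulation of \eqref{eq:main}; the boundary condition $\nu\times u=0$ is already encoded in the definition of $H_{V,0}(\curl;\Om)$, and since $\cC^\infty_0(\Om,\R^3)\subset X$ the distributional form is recovered as a special case. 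The main technical obstacle I anticipate is the Fr\'echet differentiability of $\Psi$ on the non-reflexive intersection space $L^2\cap L^p$: one must carry both norms simultaneously and match the linear and $(p-1)$-power terms of (F2) and (F3) to the correct components $L^2$ and $L^{p'}$ of the dual, rather than reducing everything to a single $L^q$ setting.
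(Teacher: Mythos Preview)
Your argument is correct and follows the standard route: split $J=\tfrac12 Q-\Psi$, treat the quadratic part as a bounded bilinear form, and handle $\Psi$ via the Nemytskii operator using the growth bounds from (F2)--(F3). The paper itself does not prove this proposition; it merely states it and refers to \cite{BartschMederski1,BartschMederski2}, where essentially the same argument is carried out.

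One minor remark on your closing concern: since $\Om$ is bounded and $p>2$, we have $L^p(\Om,\R^3)\subset L^2(\Om,\R^3)$, so $L^2\cap L^p=L^p$ with equivalent norms, which is reflexive. The estimate $|f(x,u)|\le c(|u|+|u|^{p-1})$ then places $f(\cdot,u)$ directly in $L^{p'}(\Om,\R^3)$ (because $L^p\subset L^{p'}$ on a bounded domain), and the dual-space splitting you anticipate is not actually needed here.
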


In order to illustrate the difficulties in dealing with $J$ consider the model case $F(x,u)=\frac1p|u|^p$ so that
\[
  J(u) = \frac12\int_\Om\langle\mu(x)^{-1}\curlop u,\curlop u\rangle\,dx
            - \frac12\int_\Om \langle V(x)u, u\rangle\,dx - \frac1p\int_\Om |u|^p\,dx.
\]
Then $J|_\cV$ has mountain pass geometry and $J|_\cW$ is strictly concave with $0$ as global maximum. All nontrivial critical points have an infinite Morse index because for $\psi\in\cW$ there holds
\[
J''(u)[\psi,\psi] = - \int_\Om \langle V(x)\psi, \psi\rangle\, dx - (p-1)\int_\Om |u|^{p-2}|\psi|^2\,dx \le 0.
\]
An additional difficulty is that $J':X\to X^*$ is not sequentially weak-to-weak$^*$ continuous. Therefore the critical point theory for strongly indefinite functionals from \cite{BartschDing,BenciRabinowitz,DingBook,MederskiSystem} does not apply.

\section{Symmetry}\label{sec:sym}

Let us first consider the fully radially symmetric case where $\Om\subset\R^3$ may be a ball, an annulus, the exterior of a ball, or all of $\R^3$. For simplicity we only deal with the equation
\begin{equation}\label{eq:main-rad}
  \curlop\curlop u + V(|x|)u = \Ga(|x|)|u|^{p-2}u  \quad \mbox{ in } \Om,
\end{equation}
and assume that $V,\Ga:I\to\R$ with $I=\{|x|:x\in\Om\}\subset[0,\infty)$. Then the following holds.

\begin{Th}\label{thm:radial}
Let $p>2$ and suppose that $V,\Ga\in L^\infty_{loc}(I)$ and $0\le V\Ga^{-1}\in L_{loc}^{\frac{p-1}{p-2}}(\Om)$. Let $u\in L^{p-1}_{loc}(\R^3)$ be a distributional solution of \eqref{eq:main-rad} such that $u(x)=M^T u(Mx)$ for a.a. $x\in \Om$ and all $M\in O(3)$. Then $\nabla\times u=0$, $V(r)\Ga(r)\ge0$ for all $r\in I$, and there exists a measurable function $s:I \to \{-1,1\}$ such that
\begin{equation}\label{eq:sol-rad}
u(x) = s(|x|) \left(\frac{V(x)}{\Ga(x)}\right)^\frac{1}{p-2} \frac{x}{|x|}.
\end{equation}
Conversely, any $u$ as in \eqref{eq:sol-rad} is curl-free and solves \eqref{eq:main-rad}.
\end{Th}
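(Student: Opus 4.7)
The plan is to exploit the full $O(3)$-equivariance of $u$: it forces $u$ into a purely radial profile, which automatically annihilates the curl-curl operator and reduces \eqref{eq:main-rad} to a pointwise algebraic identity.

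First I would extract the structure of $u$. For a.a.\ $x\ne 0$, applying the hypothesis $u(x)=M^T u(Mx)$ with any $M$ in the stabilizer $\{M\in O(3):Mx=x\}$ yields $M u(x)=u(x)$. The fixed subspace of this stabilizer (which is isomorphic to $O(2)$ acting on $x^\perp$) is exactly $\mathrm{span}(x)$, so $u(x)=\rho(x)\,x/|x|$ for some scalar $\rho(x)$. Substituting this back into $u(Mx)=Mu(x)$ for a general $M\in O(3)$ gives $\rho(Mx)=\rho(x)$, so $\rho$ depends only on $|x|$; write $\rho=\rho(|x|)$ with $\rho$ measurable on $I$.

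Next I would verify $\curlop u=0$ in $\mathcal{D}'(\Om)$. For any test field $\phi\in \cC^\infty_0(\Om,\R^3)$, Fubini in spherical coordinates gives
\[
  \int_\Om u\cdot\curlop\phi\,dx
   = \int_I \rho(r)\,r^2 \int_{S^2}\omega\cdot\curlop\phi(r\omega)\,d\sigma(\omega)\,dr,
\]
and the inner integral equals $r^{-2}\int_{\pa B(0,r)}\nu\cdot\curlop\phi\,dS$, which vanishes by the divergence theorem and the identity $\div\curlop\phi=0$. Hence $\curlop u=0$ and so $\curlop\curlop u=0$, and \eqref{eq:main-rad} collapses to
\[
  \bigl(V(|x|)-\Ga(|x|)|u|^{p-2}\bigr)u=0 \quad\text{a.e.\ in }\Om.
\]
Wherever $u(x)\ne 0$ one reads off $|u(x)|^{p-2}=V(|x|)/\Ga(|x|)$, which forces $V(r)\Ga(r)\ge 0$ at the corresponding radii; combined with the standing hypothesis $V\Ga^{-1}\ge 0$, this yields $V\Ga\ge 0$ on all of $I$. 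Since $|u(x)|=|\rho(|x|)|$, I then obtain $|\rho(r)|=(V(r)/\Ga(r))^{1/(p-2)}$, and defining $s(r):=\mathrm{sgn}\,\rho(r)$ (extended arbitrarily by $+1$ on the null set where the sign is undetermined) produces a measurable $s:I\to\{-1,1\}$ satisfying \eqref{eq:sol-rad}. The converse is immediate: any $u$ of the form \eqref{eq:sol-rad} is radial and therefore curl-free by the computation above, while $\Ga|u|^{p-2}u=Vu$ holds by construction.

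The main obstacle is conceptual rather than technical: one has to recognise that the apparently rigid $O(3)$-equivariance actually kills the top-order differential operator altogether, so that \eqref{eq:main-rad} decouples into a fibrewise algebraic equation on each sphere. Once this has been seen, the only genuine care needed is the distributional justification of $\curlop u=0$ (handled by spherical Fubini plus $\div\curlop=0$, without ever having to introduce a scalar potential) together with a routine measurable selection producing $s$.
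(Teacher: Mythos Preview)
The paper does not supply its own proof of this theorem; it simply cites \cite[Theorem~1]{BDPR:2016} and notes that the argument there carries over from $\R^3$ to any radially symmetric $\Om$. Your approach---using the stabiliser of $x$ in $O(3)$ to force $u(x)=\rho(|x|)\,x/|x|$, then showing $\curlop u=0$ via spherical Fubini and the divergence theorem applied to $\div(\curlop\phi)=0$, and finally reading off the pointwise algebraic relation---is the natural one and almost certainly coincides with the cited proof.

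One step deserves more care. In passing from the pointwise identity $(V-\Ga|\rho|^{p-2})\rho=0$ to \eqref{eq:sol-rad} you assert that the sign of $\rho$ is ``undetermined'' only on a null set. Nothing in the argument yields this: on the set $\{r\in I:\rho(r)=0\}$ the algebraic equation is vacuous and imposes no constraint on $V/\Ga$. In particular $u\equiv 0$ is always an $O(3)$-equivariant distributional solution, yet it is of the form \eqref{eq:sol-rad} with $s:I\to\{-1,1\}$ only if $V/\Ga\equiv 0$. This is really an imprecision in the statement of the theorem rather than a flaw in your strategy; the honest conclusion of your computation is that \eqref{eq:sol-rad} holds on $\{r:\rho(r)\ne 0\}$ while $u$ vanishes on the complementary shells---equivalently, that \eqref{eq:sol-rad} holds with $s$ taking values in $\{-1,0,1\}$.
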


The theorem has been proved in \cite[Theorem~1]{BDPR:2016} in the case of $\Om=\R^3$ but the proof works for any radial domain. Observe that for a curl-free field $u$ the equation \eqref{eq:main-rad} reduces to an algebraic equation. Thus the assumption of full radial symmetry does not lead to interesting solutions. We therefore relax the fully radial symmetry and look for solutions on cylindrically symmetric domains having cylindrical symmetry. These are in fact of great importance due to the phenomenon of birefringence and applications in crystallography \cite{FundPhotonics,StuartZhou03,Nie}. We allow cylindrically symmetric anisotropic materials. More precisely we require that the problem is symmetric with respect to the cylindrical symmetry group
\[
  G = O(2)\times\{1\}
   = \left\{\begin{pmatrix}
              \cos \alpha  & -\sin\alpha & 0 \\
              \sin\alpha & \cos\alpha & 0 \\
              0 & 0 & 1
            \end{pmatrix} : \alpha \in \R\right\}\subset O(3)
\]
in the following sense:
\begin{itemize}
\item[(S)] $\Om$ is invariant with respect to $G$, and $F:\Om\times\R^3\to\R$ is invariant with respect to the action of $G$ on the $x$- and $u$-variables, i.e.\ $F(g_1x,g_2u)=F(x,u)$ for all $x\in\Om$, $u\in\R^3$, $g_1,g_2\in G$. Moreover, $\mu(x)$ and $V(x)$ commute with $G$, and $\mu,V$ are invariant with respect to $G$, i.e.\ $g_2\mu(g_1x)g_2^{-1}=\mu(x)$ for all $x\in\Om$, $g_1,g_2\in G$; similarly for $V$.
\end{itemize}
The invariance of $F$ with respect to $G$ is equivalent to the statement that
\[
  F(x,u)=F\left(\sqrt{x_1^2+x_2^2},x_3,\sqrt{u_1^2+u_2^2},u_3\right) \qquad\text{holds for $x\in\Om$, $u\in\R^3$.}
\]
That the permeability tensor $\mu(x)$ commutes with $G$ is equivalent to
\[
  \mu(x) = \begin{pmatrix}a(x) & 0 & 0\\0 & a(x) & 0\\0 & 0 & b(x)\end{pmatrix},
\]
with $a,b\in L^{\infty}(\Om)$ positive, bounded away from $0$, and invariant with respect to the action of $G$ on $\Om$; similarly for $V(x)$, hence for the permittivity tensor $\eps(x)$.

The first existence theorem for solutions of \eqref{eq:main} deals with superlinear nonlinearities, e.g.\ Kerr type nonlinearities. We assume the following Ambrosetti-Rabinowitz condition.
\begin{itemize}
  \item[(F4)] There exists $\be>2$ and $R>0$ such that $\langle f(x,u),u\rangle \ge \be F(x,u) > 0$ for all $x\in\Om$ and all $u\in\R^3$ with $|u|\ge R$.
\end{itemize}

The following result is due to \cite[Theorem~2.5]{BartschMederski2}.

\begin{Th}\label{thm:sym1-superlin}
  Suppose (L1), (S), (F1)-(F4) hold and suppose that $F$ is even in $u$: $F(x,-u)=F(x,u)$. Then there exist infinitely many solutions $u_n$ of the form
  \begin{equation}\label{eq:sym1}
    u(x)=\al(r,x_3)\begin{pmatrix}-x_2\\x_1\\0\end{pmatrix},\qquad r=\sqrt{x_1^2+x_2^2},
  \end{equation}
  and such that $J(u_n)\to\infty$.
\end{Th}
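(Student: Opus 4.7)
The plan is to exploit the cylindrical symmetry via Palais' principle of symmetric criticality together with a $\Z_2$-equivariant fountain theorem applied to $J$ restricted to an invariant subspace $X^\tau\subset X$ on which $J$ is no longer strongly indefinite. First I would choose a representation $\tau:G\to\O(3)$ so that the ansatz \eqref{eq:sym1} characterises exactly the $\tau$-equivariant fields
\[
  X^\tau:=\{u\in X: u(gx)=\tau(g)u(x)\text{ for all }g\in G,\ x\in\Om\}.
\]
Under (S) the data $\mu$, $V$, $F$ are compatible with the isometric $G$-action $(g\cdot u)(x):=\tau(g)u(g^{-1}x)$, so $J$ is $G$-invariant and Palais' principle reduces the task to finding critical points of $J|_{X^\tau}$.

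The decisive structural observation is that $X^\tau\subset\cV$. Indeed, for $u(x)=\al(r,x_3)(-x_2,x_1,0)^\top$ the block form of $V$ forced by (S) makes $V(x)u$ a field of the same azimuthal form with profile depending only on $r$ and $x_3$, whose distributional divergence vanishes by an immediate calculation. Consequently the curl-free component $\cW$ of the Helmholtz decomposition plays no role and Lemma~\ref{lem:evp} applies: the quadratic part of $J|_{X^\tau}$ is positive definite outside the finite-dimensional subspace $\tcV\cap X^\tau$. A second benefit of the symmetry is a regularity gain: writing $u\in H_{V,0}(\curl;\Om)$ in cylindrical coordinates reduces membership to an $H^1$-type condition on the scalar profile $\al$, so $X^\tau$ embeds compactly into $L^p(\Om,\R^3)$ for every $2\le p<6$ by Rellich--Kondrachov, circumventing the delicate compactness issues of the general space $\cV_0$.

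Granting these two structural facts, I would verify in a standard way the fountain geometry of $J|_{X^\tau}$: using (F2)--(F3), $J$ is uniformly positive on a small sphere in $\cV^+\cap X^\tau$; using the Ambrosetti--Rabinowitz condition (F4), $J$ tends to $-\infty$ on every finite-dimensional subspace of $X^\tau$; and the Palais--Smale condition on $X^\tau$ follows from (F3) together with the compact embedding above. Since $F(x,\cdot)$ is even and $X^\tau$ is $\Z_2$-symmetric, the even fountain theorem (cf.\ \cite{BartschDing,Rabinowitz:1986}) then produces a sequence $(u_n)\subset X^\tau$ of critical points of $J$ with $J(u_n)\to\infty$.

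The hardest part is the interplay between the symmetric ansatz and the Helmholtz structure, namely simultaneously establishing $X^\tau\subset\cV_0$ and the compact embedding into $L^p$ for $p>2$ with the correct encoding of the boundary condition $\nu\times u=0$ into a Dirichlet-type condition on $\al$. Once these ingredients are in place, the remainder is classical equivariant variational theory.
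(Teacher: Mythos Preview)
Your strategy coincides with the paper's: restrict $J$ to the subspace of fields of form \eqref{eq:sym1}, note that this subspace lies in $\cV$ and (via $H^1_0$-regularity coming from $\div u=0$, $\curlop u\in L^2$, and $\nu\times u=0\Rightarrow u|_{\pa\Om}=0$) embeds compactly into $L^p(\Om,\R^3)$, then run the symmetric mountain pass / fountain theorem. The paper packages the symmetry reduction slightly differently---first the natural $G$-action (with $\tau=\id$) yielding the decomposition $u=u_\tau+u_\rho+u_\zeta$ of $G$-equivariant fields, then an extra isometry $S_1(u_\tau+u_\rho+u_\zeta):=u_\tau-u_\rho-u_\zeta$ whose fixed point space is exactly the fields of form \eqref{eq:sym1}---but this is equivalent to enlarging the action as you suggest.

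One point deserves care, and it is the only real gap in your outline: condition (S) alone does \emph{not} make $J$ invariant under any action whose fixed point set is precisely the $u_\tau$-fields. With the natural representation $\tau=\id$ of $G$ the fixed point space also contains $u_\rho$ and $u_\zeta$; any further symmetry that eliminates these (adjoining $S_1$, or equivalently a reflection twisted by the determinant character) needs the evenness $F(x,-u)=F(x,u)$ to keep the $F$-term invariant. In the paper's language, (S) gives only $J(-S_1u)=J(u)$, and it is evenness that upgrades this to $J(S_1u)=J(u)$. Thus evenness enters the argument twice---once to justify the Palais reduction to $X^\tau$, and once for the $\Z_2$-structure of the fountain theorem---whereas your writeup invokes it only for the second purpose. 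You should make the first use explicit.
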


We give a sketch of the proof.
\begin{proof}
Since $\Om$ is invariant under $G=O(2)\times\{1\}\subset O(3)$ we can define an action of $g\in G$ on $u\in L^2(\Om,\R^3)$ by setting
\begin{equation}\label{eq:G-action}
  (g*u)(x) := g\cdot u(g^{-1}x).
\end{equation}
This action leaves $X = H_{V,0}(\curl;\Om)\cap L^p(\Om,\R^3)$ and the subspaces $\cV,\cW$ invariant. The fixed point set $X^G=\cV^G\oplus\cW^G$ consists of all $G$-equivariant vector fields. In view of \cite[Lemma 6.2]{BartschMederski2}, any $u\in W_0^p(\curl;\Om)^G$ has a unique decomposition $u=u_\tau+u_\rho+u_\zeta$ with summands of the form
\[
  u_\tau(x) = \al(r,x_3)\begin{pmatrix}-x_2\\x_1\\0\end{pmatrix},\;
  u_\rho(x) = \be(r,x_3)\begin{pmatrix}x_1\\x_2\\0\end{pmatrix},\;
  u_\zeta(x) = \ga(r,x_3)\begin{pmatrix}0\\0\\1\end{pmatrix}.
\]
The map
\begin{equation}\label{eq:def-S1}
  S_1:W_0^p(\curl;\Om)^G \to W_0^p(\curl;\Om)^G, \quad S_1(u_\tau+u_\rho+u_\zeta) := u_\tau-u_\rho-u_\zeta
\end{equation}
is a linear isometry. The symmetry condition (S) implies that $J(u_\tau+u_\rho+u_\zeta)=J(-u_\tau+u_\rho+u_\zeta)$. In fact, the equalities $|\curlop u|_{\mu^{-1}}^2=|\curlop u_\tau|_{\mu^{-1}}^2+|\curlop(u_\rho+u_\zeta)|_{\mu^{-1}}^2$ and $|u|_V^2=|u_\tau|_V^2+|u_\rho|_V^2+|u_\zeta|_V^2$ hold (even pointwise), see \cite{BenForAzzAprile}. In addition $F(x,u(x))=F(x,-S_1(u)(x))$ holds by (S), hence $F(x,u(x))=F(x,S_1(u)(x))$ holds if $F$ is even in $u$.

Therefore it is sufficient to find critical points of $J$ constrained to the fixed point set
\begin{equation}\label{eq:S1-fp}
  (X^G)^{S_1} := \{u\in X^G: S_1(u)=u\} = \{u\in X^G: u=u_\tau\}\subset\cV.
\end{equation}

Observe that $u=u_\tau$ implies $\div(V(x)u)=0$ because $V=V(r,x_3)$, hence $(X^G)^{S_1} \subset \cV$. Moreover, the boundary condition $\nu\times u=0$ implies that $u=0$ on the boundary, hence $(X^G)^{S_1} \subset H^1_0(\Om,\R^3)$ embeds compactly into $L^p(\Om,\R^3)$. Consequently $J|(X^G)^{S_1}$ satisfies the Palais-Smale condition. Using (F4) and Lemma~\ref{lem:evp} it is easy to verify the hypotheses of the symmetric mountain pass theorem \cite{Ambrosetti-Rabinowitz:1973,Rabinowitz:1986} or of the fountain theorem \cite[Theorem~2.5]{Bartsch:1993}.
\end{proof}

We can also treat asymptotically linear nonlinearities, in particular nonlinearities with saturation like \eqref{ex:sat}. To state one result in this direction we assume the following.
\begin{itemize}
  \item[(F5)] There exists $V_\infty\in L^{\infty}(\Om,\R^{3\times 3})$ with $V_\infty(x)$ being symmetric and $V_\infty(x)+C_\infty\id$ positive definite uniformly for $x\in\Om$, some $C_\infty\in\R$, such that $f(x,u)=V_\infty(x)[u]+o(|u|)$ as $|u|\to\infty$.
\end{itemize}

By Lemma~\ref{lem:evp} the quadratic forms
\[
  Q_0(v) := \int_\Om \left(\langle\mu(x)^{-1}\curlop v,\curlop v\rangle - \langle V(x)v,v\rangle \right)\,dx,
\]
and
\[
  Q_\infty(v)
   := \int_\Om \left(\langle\mu(x)^{-1}\curlop v,\curlop v\rangle - \langle (V(x)+V_\infty(x))v,v\rangle \right)\,dx,
\]
defined on $(X^G)^{S_1} \subset \cV$ as in \eqref{eq:S1-fp} have a finite index. Using the constrained functional $J|(X^G)^{S_1}$ and standard critical point theory one can prove the following theorem.

\begin{Th}\label{thm:sym1-asy-lin}
  Suppose (L1), (S), (F1)-(F3), (F5) hold and that $F$ is even in $u$. Suppose moreover that the quadratic forms $Q_0,Q_\infty:(X^G)^{S_1}\to\R$ are non-degenerate with indices $i_0,i_\infty\in\N_0$, respectively. Then there exist at least $|i_0-i_\infty|$ many nontrivial pairs of solutions $\pm u_n$ of the form \eqref{eq:sym1}.
\end{Th}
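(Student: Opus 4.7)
The plan is to reduce to the same fixed-point subspace $(X^G)^{S_1}\subset\cV\cap H^1_0(\Om,\R^3)$ used in the proof of Theorem~\ref{thm:sym1-superlin}, and then apply a $\Z_2$-equivariant Morse-theoretic multiplicity result for even, asymptotically quadratic functionals whose quadratic forms at $0$ and at infinity have different Morse indices.

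First, exactly as in the proof of Theorem~\ref{thm:sym1-superlin}, the $G$-action \eqref{eq:G-action} together with the additional $\Z/2$-action generated by $S_1$ from \eqref{eq:def-S1} leaves $J$ invariant, since $F$ is even and (S) holds. By the principle of symmetric criticality, critical points of $J$ restricted to $(X^G)^{S_1}$ are genuine critical points of $J$ on $X$, hence weak solutions of \eqref{eq:main} of the form \eqref{eq:sym1}. Moreover, (S) forces $\div(V(x)u)=0$ for $u=u_\tau$, so $(X^G)^{S_1}\subset\cV$, and the boundary condition $\nu\times u=0$ together with the specific form of $u_\tau$ forces $u=0$ on $\pa\Om$; hence $(X^G)^{S_1}$ embeds into $H^1_0(\Om,\R^3)$ and therefore compactly into $L^p(\Om,\R^3)$ by the Rellich--Kondrachov theorem.

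Next I would verify the analytic ingredients needed for an equivariant linking/Morse argument on the Hilbert space $H:=(X^G)^{S_1}$. By (F1)-(F3) and compactness, the nonlinear functional $\Psi(u):=\int_\Om F(x,u)\,dx$ is of class $\cC^1$ with compact gradient on $H$. Writing $J|_H(u)=\frac12 Q_0(u)-\Psi(u)+\frac12\int_\Om\langle V(x)u,u\rangle\,dx-\Psi(u)$ is not quite what we want; rather, by (F2) we have $J''(0)[v,v]=Q_0(v)$ on $H$, and by (F5) together with compactness of the embedding into $L^2$ we have $J(u)=\frac12 Q_\infty(u)+o(\|u\|^2)$ as $\|u\|\to\infty$, with the analogous identity for $J'$. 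Non-degeneracy of $Q_0$ and $Q_\infty$ on $H$ rules out $0$ and $\infty$ as limits of critical values; combined with compactness of the gradient of $\Psi$, this yields the Palais--Smale condition on $H$ by the standard splitting argument for asymptotically linear problems.

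Finally, $J|_H$ is even, $\cC^1$, satisfies Palais--Smale, has $0$ as a non-degenerate critical point with finite Morse index $i_0$, and is asymptotic at infinity to a non-degenerate quadratic form of Morse index $i_\infty$. In this setting a classical equivariant multiplicity theorem---going back to Amann--Zehnder and available in the form convenient for us in, e.g., \cite{BartschDing} or \cite{Rabinowitz:1986} via the Krasnoselskii genus---produces at least $|i_0-i_\infty|$ pairs $\pm u_n$ of nontrivial critical points of $J|_H$. The main obstacle, and the step I would devote most care to, is the Palais--Smale verification: in the asymptotically linear regime one must exclude non-compactness caused by sequences along which $J'(u_n)\to 0$ but $\|u_n\|\to\infty$; the non-degeneracy of $Q_\infty$ on $H$ (which plays the role of a non-resonance assumption at infinity) together with the compactness of the gradient of $\Psi$ on $H$ is precisely what makes this step go through.
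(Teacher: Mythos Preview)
Your reduction to the fixed-point space $(X^G)^{S_1}\subset H^1_0(\Om,\R^3)$, the compactness arguments, and the identification of $Q_0$ and $Q_\infty$ as the relevant quadratic forms at $0$ and at infinity are exactly the paper's approach; the paper then finishes by invoking Clark's theorem \cite{Clark:1972} when $i_0>i_\infty$ and Bartolo--Benci--Fortunato \cite{Bartolo-etal:1983} when $i_\infty>i_0$, rather than a unified Amann--Zehnder type statement. This is only a difference in which off-the-shelf multiplicity theorem is quoted, not in strategy; your citation of \cite{BartschDing} is a bit off (that paper concerns non-metrizable strongly indefinite settings, unnecessary here since both indices are finite and $H$ is a Hilbert space), and the two-case split in the paper has the advantage that \cite{Bartolo-etal:1983} also covers degenerate $Q_0,Q_\infty$, which your Morse-index formulation does not.
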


The case $i_0>i_\infty$ follows from \cite[Theorem~12]{Clark:1972}, the case $i_\infty>i_0$ from \cite{Bartolo-etal:1983}. The results from \cite{Bartolo-etal:1983} allow even to consider the case when $Q_0,Q_\infty$ are degenerate.

\begin{Rem}\label{rem:sym1-sat}
  Theorem~\ref{thm:sym1-asy-lin} applies to the nonlinearity with saturation \eqref{ex:sat}:
  \[
    f(x,u) = \chi^{(3)}(x)\frac{|u|^2}{1+|u|^2}u = \chi^{(3)}(x)u + o(|u|) \qquad\text{as $|u|\to\infty$}
  \]
  when $V_\infty(x)=\chi^{(3)}(x)$ is scalar and invariant under $G$. One sets $F(x,u)=\frac12\chi^{(3)}(x)H(|u|^2)$ where $H(t)=\int_0^t\frac{s^2}{1+s^2}\,ds$. Observe that $\chi^{(3)}(x)$ is the cubic susceptibility for $|u|$ small. Theorem~\ref{thm:sym1-asy-lin} may be interpreted as saying that the larger $\chi^{(3)}$ is, the more solutions of the form \eqref{eq:sym2} exist.
\end{Rem}

The evenness of $F$ in $u$ in Theorems~\ref{thm:sym1-superlin} and \ref{thm:sym1-asy-lin} was needed to prove that $J(S_1(u))=J(u)$. Without $F$ being even it follows from (S) that $J(-S_1(u))=J(u)$ which suggests to look for solutions that are fixed by $S_2=-S_1$. These solutions are of the form
\begin{equation}\label{eq:sym2}
  u(x)=\be(r,x_3)\begin{pmatrix}x_1\\x_2\\0\end{pmatrix} + \ga(r,x_3)\begin{pmatrix}0\\0\\1\end{pmatrix}.
\end{equation}
However the space $(X^G)^{S_2}$ of such functions does not embed into $\cV$ nor does it embed compactly into $L^p(\Om,\R^3)$. Therefore this does not lead to a simpler setting than looking for critical points of $J$ in the full space. In order to obtain such solutions we need the critical point theory from \cite{BartschMederski1} which we present in the next section.

The idea to look for solutions of \eqref{eq:main} of the form \eqref{eq:sym1} and to use the action of $S_1$ on cylindrically symmetric vector fields is due to \cite{BenForAzzAprile} for a special class of curl-curl equations on $\R^3$. For this class solutions of the form \eqref{eq:sym2} have been obtained in \cite{DAprileSiciliano}. We shall present these results in Section~\ref{sec:R^3}.

\section{Critical point theory}\label{sec:CriticalTheory}

In order to treat the full functional $J$ from \eqref{eq:action} so far in all papers on the topic more rigorous hypotheses are required for $F$, in particular $F(x,u)$ has to be convex in $u$. This allows the following approach. Decompose $X = X^+\oplus \tX$ so that $J$ has the form
\begin{equation}\label{EqJ}
  J(u) = \frac12\|u^+\|^2-I(u) \quad\text{for $u=u^++\tu \in X^+\oplus \tX$}
\end{equation}
with a convex functional $I$. The space $\tX$ contains in particular the space $\cW$, i.e.\ the infinite-dimensional kernel of the curl operator. Then one can maximize $J(u^++\tu)$ for fixed $u^+$, obtaining a function $\tilde m:X^+\to\tX$. Setting $m(u^+) = u^+ + \tilde m(u^+)$ a critical point $u^+$ of $J\circ m$ corresponds to a critical point $m(u^+)$ of $J$. This is reminiscent of the approach to the linear Maxwell equations when one splits off the curl-free part and solves for the divergence-free part. Of course this approach is also well known for strongly indefinite functionals, but its realization for $J$ requires some new ideas. We present here the critical point theory from \cite{BartschMederski1,BartschMederski2}. This may be useful also for other strongly indefinite problems with an infinite-dimensional kernel of the differential operator, like nonlinear wave equations.

Let $X$ be a reflexive Banach space with norm $\|\cdot\|$ and with a topological direct sum decomposition $X=X^+\oplus\tX$, where $X^+$ is a Hilbert space with a scalar product. For $u\in X$ we denote by $u^+\in X^+$ and $\tu\in\tX$ the corresponding summands so that $u = u^++\tu$. We may assume that $\langle u,u \rangle = \|u\|^2$ for any
$u\in X^+$ and that $\|u\|^2 = \|u^+\|^2+\|\tu \|^2$. The topology $\cT$ on $X$ is defined as the product of the norm topology in $X^+$ and the weak topology in $\tX$. Thus $u_n\cTto u$ is equivalent to $u_n^+\to u^+$ and $\tu_n\weakto\tu$.

Let $J$ be a functional on $X$ of the form \eqref{EqJ}. The set
\begin{equation}\label{eq:ConstraintM}
  \cM := \{u\in X:\, J'(u)|_{\tX}=0\}=\{u\in X:\, I'(u)|_{\tX}=0\},
\end{equation}
obviously contains all critical points of $J$. Suppose the following assumptions hold.
\begin{itemize}
\item[(I1)] $I\in\cC^1(X,\R)$ and $I(u)\ge I(0)=0$ for any $u\in X$.
\item[(I2)] $I$ is $\cT$-sequentially lower semicontinuous:
    $u_n\cTto u\quad\Longrightarrow\quad \liminf I(u_n)\ge I(u)$
\item[(I3)] If $u_n\cTto u$ and $I(u_n)\to I(u)$ then $u_n\to u$.
\item[(I4)] $\|u^+\|+I(u)\to\infty$ as $\|u\|\to\infty$.
\item[(I5)] If $u\in\cM$ then $I(u)<I(u+v)$ for every $v\in \tX\setminus\{0\}$.
\end{itemize}
Clearly (I5) is satisfied for a strictly convex functional $I$. The following proposition has been proved in \cite[Proof of Theorem~4.4]{BartschMederski2}.

\begin{Prop}\label{prop:cM}
  If $I$ satisfies (I1)-(I5) then the functional $J$ from \eqref{EqJ} and $\cM$ from \eqref{eq:ConstraintM} have the following properties.

  a) For each $u^+\in X^+$ there exists a unique $\tu\in \tX$ such that $m(u^+):=u^++\tu\in\cM$. This $m(u^+)$ is the minimizer of $I$ on $u^++\tX$.

  b) $m:X^+\to \cM$  is a homeomorphism with inverse $\cM\ni u\mapsto u^+\in X^+$.

  c) $J\circ m:X^+\to\R$ is $\cC^1$.

  d) $(J\circ m)'(u^+) = J'(m(u^+))|_{X^+}:X^+\to\R$ for every $u^+\in X^+$.

  e) $(u_n^+)_n\subset X^+$ is a Palais-Smale sequence for $J\circ m$ if, and only if, $(m(u_n^+))_n$ is a Palais-Smale sequence for $J$ in $\cM$.

  f) $u^+\in X^+$ is a critical point of $J\circ m$ if, and only if, $m(u^+)$ is a critical point of $J$.

  g) If $J$ is even, then so is $J\circ m$.
\end{Prop}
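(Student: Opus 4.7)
The overall plan is to reduce the analysis of $J$ on $X$ to that of $J\circ m$ on the Hilbert space $X^+$, by constructing $m$ via maximization of $J$ on each affine slice $u^++\tX$ (equivalently, minimization of $I$ on this slice, since $J=\tfrac12\|\cdot^+\|^2-I$), and then transferring regularity, Palais-Smale structure, critical points, and symmetry along this reduction.

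For (a), I would fix $u^+$ and take a minimizing sequence $\tu_n\in\tX$ for $I(u^++\cdot)$. Assumption (I4) forces $(\tu_n)$ to be bounded, so reflexivity yields $\tu_n\weakto\tu$ along a subsequence; then $u^++\tu_n\cTto u^++\tu$, and (I2) identifies $\tu$ as a minimizer. The first-order condition places $m(u^+):=u^++\tu$ in $\cM$, and a second minimizer is ruled out by (I5). For (b), the inverse map $u\mapsto u^+$ is tautologically continuous. For continuity of $m$ itself, let $u_n^+\to u^+$, set $\tu_n:=m(u_n^+)-u_n^+$ and $\tu_0:=m(u^+)-u^+$; the comparison $I(u_n^++\tu_n)\le I(u_n^++\tu_0)$ combined with (I4) bounds $(\tu_n)$, every weak cluster point is forced to equal $\tu_0$ by (I2) and the uniqueness from (a), and matching limits of $I$ activate (I3) to give norm convergence. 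A subsequence-subsequence argument concludes.

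Parts (c)--(d) are the crux. Since $\widetilde{m}:=m-\id_{X^+}$ is only known to be continuous under (I1)--(I5), the chain rule cannot be applied directly. I would extract differentiability variationally: the maximality of $\widetilde{m}(u^+)$ and $\widetilde{m}(u^++th)$ on their respective slices sandwiches the difference quotient $[(J\circ m)(u^++th)-(J\circ m)(u^+)]/t$ between the expression $[J(u^++th+\widetilde{m}(u^+))-J(u^++\widetilde{m}(u^+))]/t$ (lower bound, since $\widetilde{m}(u^++th)$ beats $\widetilde{m}(u^+)$ on the shifted slice) and the analogous expression with $\widetilde{m}(u^++th)$ in place of $\widetilde{m}(u^+)$ (upper bound, since $\widetilde{m}(u^+)$ beats $\widetilde{m}(u^++th)$ on the base slice). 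By the mean value theorem applied to $s\mapsto J(u^++sh+\widetilde{m}(\cdot))$, together with continuity of $m$ and of $J'$, both outer quantities tend to $J'(m(u^+))[h]$ as $t\to0$, giving (d); continuity of $(J\circ m)'$ follows from the same ingredients.

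For (e)--(g), the identity $(J\circ m)'(u^+)=J'(m(u^+))|_{X^+}$ from (d), combined with $J'(m(u^+))|_{\tX}=0$ (which holds because $m(u^+)\in\cM$), shows that $\|J'(m(u^+))\|_{X^*}=\|(J\circ m)'(u^+)\|_{(X^+)^*}$; this settles (e) and (f) at once. For (g), evenness of $J$ gives $J'(-u)=-J'(u)$, so $\cM$ is invariant under $u\mapsto-u$, and uniqueness in (a) forces $m(-u^+)=-m(u^+)$, making $J\circ m$ even. The main obstacle is (c)--(d): without a priori differentiability of $m$, the chain rule is unavailable, and the differentiability of $J\circ m$ must be wrested from the variational characterization via the pinching argument above, whose coherence depends crucially on the strict minimization property (I5).
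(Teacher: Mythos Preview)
Your argument is correct and complete. The paper itself does not give a proof of this proposition but simply refers to \cite[Proof of Theorem~4.4]{BartschMederski2}; your outline is precisely the standard argument one finds there, in particular the sandwich/pinching argument for (c)--(d) that bypasses differentiability of $m$ by exploiting the maximality of $J$ on each slice $u^++\tX$, together with the continuity of $m$ from (b) and of $J'$ from (I1). The identification $\|J'(m(u^+))\|_{X^*}=\|(J\circ m)'(u^+)\|_{(X^+)^*}$ in (e) is justified by the normalization $\|u\|^2=\|u^+\|^2+\|\tu\|^2$ assumed in the setup, so nothing is missing.
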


Observe that $m$ need not be $\cC^1$, and $\cM$ need not be a differentiable manifold, because $I'$ is only required to be continuous. As a consequence of Proposition~\ref{prop:cM} it remains to find critical points of $J\circ m$. This requires additional assumptions in order to apply classical critical point theorems like the mountain pass theorem to $J\circ m$.
\begin{itemize}
\item[(I6)] There exists $r>0$ such that $a:=\inf\limits_{u\in X^+,\|u\|=r} J(u)>0$.
\item[(I7)] There exists $u^+\in X^+$ such that $\sup_{v\in\tX} J(u+v) < a$.
\item[(I8)] $I(t_nu_n)/t_n^2\to\infty$ if $t_n\to\infty$ and $u_n^+\to u^+\ne 0$ as $n\to\infty$.
\end{itemize}
It is not difficult to see that (I8) implies (I7). The functional $J$ is said to satisfy the {\em $(PS)_c^\cT$-condition in} $\cM$ if every $(PS)_c$-sequence $(u_n)_n$ for the unconstrained functional and such that $u_n\in\cM$ has a subsequence which converges in the $\cT$-topology:
\[
u_n \in \cM,\ J'(u_n) \to 0,\ J(u_n) \to c \qquad\Longrightarrow\qquad
u_n \cTto u\in X\ \text{ along a subsequence.}
\]
We shall use this concept below also for other subsets $\cN$ of $X$ instead of $\cM$.

\begin{Th}\label{thm:mpt} Suppose (I1)-(I7) hold and set
  \[
    c_\cM := \inf_{\ga\in\Ga} J(\ga(t))
  \]
  where
  \[
    \Ga = \{\ga\in\cC^0([0,1],\cM):\ga(0)=0,\ \|\ga(1)^+\|>r, \text{ and } J(\ga(1))<a\}.
  \]
  Then the following holds.

  a) $c_{\cM}\ge a>0$ and $J$ has a $(PS)_{c_\cM}$-sequence in $\cM$.

  b) If $J$ satisfies the $(PS)_{c_\cM}^\cT$-condition in $\cM$ then $c_\cM$ is achieved by a critical point of $J$.

  c) If $J$ satisfies (I8), the $(PS)_c^\cT$-condition in $\cM$ for every $c$, and if $J$ is even then it has an unbounded sequence of critical values.
\end{Th}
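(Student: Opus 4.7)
The strategy is to reduce the problem to the constrained functional $\Phi:=J\circ m\colon X^+\to\R$ supplied by Proposition~\ref{prop:cM}, which is of class $\cC^1$, identifies critical points of $\Phi$ with critical points of $J$, transforms Palais--Smale sequences accordingly, and preserves evenness. This turns the task of finding critical points of the strongly indefinite $J$ into a classical mountain pass argument on the Hilbert space $X^+$.

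For (a), I first check the mountain pass geometry of $\Phi$. Since $I$ is minimized at $0$ by (I1), $0\in\cM$ and thus $m(0)=0$, giving $\Phi(0)=0$. Because $m(u^+)$ is the $I$-minimizer over $u^++\tX$, $\Phi(u^+)=\sup_{v\in\tX}J(u^++v)\ge J(u^+)$, and (I6) then yields $\Phi(u^+)\ge a$ on $\|u^+\|=r$. Together with the definition of $\Gamma$, assumption (I7) provides $u_0^+\in X^+$ with $\|u_0^+\|>r$ and $\Phi(u_0^+)<a$. The correspondence $\eta\mapsto m\circ\eta$ bijectively relates paths in $X^+$ joining $0$ to $u_0^+$ with paths in $\Gamma$ and preserves the maximum of $J$, so $c_\cM$ coincides with the ordinary mountain pass level for $\Phi$. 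The mountain pass theorem (via Ekeland's variational principle, requiring no PS condition at this step) gives $c_\cM\ge a$ together with a $(PS)_{c_\cM}$-sequence $(u_n^+)$ for $\Phi$; by Proposition~\ref{prop:cM}(d),(e), $(m(u_n^+))\subset\cM$ is the desired $(PS)_{c_\cM}$-sequence for $J$.

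For (b), apply the $(PS)_{c_\cM}^\cT$-condition in $\cM$ to $u_n:=m(u_n^+)$: along a subsequence, $u_n\cTto u$, i.e.\ $u_n^+\to u^+$ \emph{strongly} in $X^+$ and $\tu_n\weakto\tu$ in $\tX$. Continuity of $m$ (Proposition~\ref{prop:cM}(b)) upgrades this to $m(u_n^+)\to m(u^+)$ strongly in $X$, so $u=m(u^+)\in\cM$; continuity of $J$ and $J'$ in the norm topology then gives $J(u)=c_\cM$ and $J'(u)=0$.

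For (c), $\Phi$ is even by Proposition~\ref{prop:cM}(g). Hypothesis (I8) makes $\Phi$ anti-coercive on every finite-dimensional subspace $Y\subset X^+$: for $u_n^+\in Y$ with $t_n:=\|u_n^+\|\to\infty$, compactness of the unit sphere of $Y$ yields, up to a subsequence, $u_n^+/t_n\to w$ with $\|w\|=1$; setting $v_n:=m(u_n^+)/t_n$ one has $v_n^+=u_n^+/t_n\to w\neq 0$, whence (I8) gives $I(m(u_n^+))/t_n^2=I(t_n v_n)/t_n^2\to\infty$, and so $\Phi(u_n^+)=\tfrac12 t_n^2-I(m(u_n^+))\to-\infty$. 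Combining this anti-coercivity with the mountain pass geometry from (a), evenness, and the classical PS condition for $\Phi$ at every level (inherited from the $\cT$-$(PS)$ condition on $\cM$ exactly as in (b)), one applies the symmetric mountain pass theorem \cite{Ambrosetti-Rabinowitz:1973,Rabinowitz:1986} or the fountain theorem \cite{Bartsch:1993} to $\Phi$ to obtain the unbounded sequence of critical values. The recurring technical obstacle is reconciling the coarse $\cT$-topology (unavoidable since the infinite-dimensional kernel of the curl operator prevents the usual PS condition from holding for $J$) with the strong topology needed by classical critical point theory; Proposition~\ref{prop:cM}, through continuity of $m$ together with the built-in strong convergence of the $X^+$-component under $\cT$-convergence, is precisely what bridges this gap.
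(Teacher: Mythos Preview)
Your proposal is correct and follows essentially the same approach as the paper, which (rather than giving a self-contained proof) refers to \cite[Theorem~4.3]{BartschMederski2} and indicates that the argument proceeds exactly by passing to $J\circ m$ on $X^+$ via Proposition~\ref{prop:cM} and invoking the classical (symmetric) mountain pass theorem. Your treatment of the $\cT$-versus-norm convergence issue in (b) and the anti-coercivity on finite-dimensional subspaces in (c) fills in details the paper leaves to the cited reference.
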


The proof can essentially be found in \cite[Theorem~4.3]{BartschMederski2}. The only difference is that there (I8) is assumed also for parts a) and b), but an inspection of the proof shows that (I7) is sufficient. Assumption (I8) holds for $I(u)$ growing superquadratically in $u$ as $|u|\to\infty$. This condition implies that $J(tu)\to-\infty$ as $t\to\infty$ for every $u\in X\setminus\tX$. Together with (I6) this yields the typical geometry of the (symmetric) mountain pass theorem for $J\circ m$. For the classical mountain pass theorem (I7) suffices. This is useful for dealing with asymptotically quadratic $I$.

An interesting problem consists in finding ground state solutions as minimizers on a suitable constraint. The Nehari manifold is a natural constraint that has proved to be very useful provided the quadratic part of the functional is positive definite. An extension to indefinite functionals is due to Pankov \cite{Pankov} in the setting of nonlinear Schr\"odinger equations, and independently to \cite{Pistoia-Ramos:2008} in the setting of elliptic systems. For an abstract version see \cite{SzulkinWethHandbook}.

We consider the set
\begin{equation}\label{eq:NehariDef}
\cN := \{u\in X\setminus\tX: J'(u)|_{\R u\oplus \tX}=0\} = \{u\in\cM\setminus\tX: J'(u)[u]=0\} \subset\cM.
\end{equation}
This set is especially useful if for each $u^+\in X^+\setminus\{0\}$ the functional $J$ has a unique critical point $n(u^+)$ on the half space $\R^+u^+ +\tX$, and if moreover $n(u^+)$ is the global maximum of $J$ on the half space $\R^+u^+ +\tX$. Then the map
\begin{equation}\label{eq:def-n}
  n:SX^+=\{u^+\in X^+: \|u^+\|=1\} \to \cN
\end{equation}
is a homeomorphism and the set $\cN$ is a topological manifold, the Nehari-Pankov manifold. It turns out that it is sufficient to find critical points of $J$ by looking for critical points of $J\circ n$. This is the approach from \cite{SzulkinWethHandbook}. In order to realize it we require the following condition on $I$:
\begin{itemize}
  \item[(I9)] $\frac{t^2-1}{2}I'(u)[u] + tI'(u)[v] + I(u) - I(tu+v) < 0$ for every $u\in \cN$, $t\ge 0$, $v\in \tX$ such that $u\neq tu+v$.
\end{itemize}

Since (I9) is a technical condition let us discuss it a bit.

\begin{Rem}\label{rem:I8} \rm
  a) If (I9) holds for functionals $I_1,I_2$ then it also holds for positive linear combinations $\al_1 I_1+\al_2 I_2$, $\al_1,\al_2\ge0$, $\al_1+\al_2>0$. If $I(u)=\frac12\langle Lu,u\rangle$ is a quadratic form with a selfadjoint operator $L$ then $\frac{t^2-1}{2}I'(u)[u] + tI'(u)[v] + I(u) - I(tu+v) = -\frac12\langle Lv,v\rangle$. In this case (I9) holds if $I$ is positive on $\tX$. If $I$ is positive semi-definite then the weak inequality holds in (I9).

  b) In applications $I$ is of the form $I(u)=\int_\Om F(x,u(x))\,dx$ for some class of fields $u:\Om\to\R^N$. Then (I9) is of course a consequence of the corresponding property of $F:\Om\times\R^N\to\R$ where $F'=\pa_uF$:
  \begin{equation}\label{eq:I9forF}\textstyle
     \frac{t^2-1}{2}F'(x,u)[u] + tF'(x,u)[v] + F(x,u) - F(x,tu+v) < 0  \qquad\text{for all $u,v\in\R^N$, all $x\in\Om$.}
  \end{equation}
  We discuss this condition in Remark~\ref{rem:F9} below.
\end{Rem}

We can now describe the Nehari-Pankov manifold as follows; see \cite{BartschMederski2}, in particular Proposition~4.1.

\begin{Prop}\label{prop:nehari}
Suppose (I1)-(I4) and (I6), (I8), (I9) hold. Then for every $u^+ \in SX^+ := \{u\in X^+:\|u\|=1\}$ the functional $J$ constrained to $\R u^++\tX = \{tu^++v:t\in\R,\ v\in\tX\}$ has precisely two critical points with positive energy: $u_1=t_1u+v_1$ and $u_2=t_2u+v_2$ where $t_1>0>t_2$, $v_1,v_2\in\tX$. Moreover, $u_1$ is the unique global maximum of $J|_{\R^+u+\tX}$, and $u_2$ is the unique global maximum of $J|_{\R^-u+\tX}$. Moreover, $u_1$ and $u_2$ depend continuously on $u\in SX^+$. Setting $n(u^+):=u_1$ there holds $\cN=\{n(u^+):u^+ \in SX^+\}$.
\end{Prop}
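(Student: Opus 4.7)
The plan is to reduce the study of $J$ on the two-dimensional slice $\R u^+ + \tX$ to a one-variable question via Proposition~\ref{prop:cM}, use the convexity-type identity (I9) to force uniqueness and the global maximum property, and close with a compactness-plus-uniqueness argument for continuity.

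For fixed $u^+\in SX^+$, the key device is the scalar function $\phi(t):=(J\circ m)(tu^+)$, which is $\cC^1$ on $\R$ with $\phi(0)=0$. Because $m(tu^+)\in\cM$ kills the $\tX$-directions, critical points of $J$ on $\R u^++\tX$ correspond exactly to zeros of $\phi'$. The geometry of $\phi$ is easy: (I6) yields $\phi(\pm r)\ge a>0$, while writing $m(tu^+)=t\bigl(u^+ + \tilde m(tu^+)/t\bigr)$ shows the $X^+$-component of the scaled vector is the fixed nonzero $u^+$, so (I8) forces $I(m(tu^+))/t^2\to\infty$ and hence $\phi(t)\to-\infty$ as $|t|\to\infty$. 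Therefore $\phi$ attains a positive maximum at some $t_1>0$ and some $t_2<0$; the points $u_j:=m(t_ju^+)$ are critical points of $J$ on $\R u^++\tX$ with positive energy and belong to $\cN$.

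For uniqueness and the strict global-maximum claim I would apply (I9) directly. If $u\in\cN$, then $\|u^+\|^2 = I'(u)[u]$ and $I'(u)|_{\tX}=0$; expanding $J(tu+v)-J(u)$ for $t\ge 0$, $v\in\tX$ with $tu+v\ne u$ gives
\[
  J(tu+v)-J(u) = \tfrac{t^2-1}{2}I'(u)[u] + tI'(u)[v] + I(u) - I(tu+v) < 0
\]
by (I9), so $u$ is the strict global maximum of $J$ on $\R^+ u + \tX = \R^+ u^+ + \tX$. This rules out a second critical point with positive energy on this half-space and yields $J(u)>0$ for every $u\in\cN$ (take $t=v=0$), so $u_1,u_2$ are the only critical points of $J$ on $\R u^++\tX$ with positive energy.

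Continuity of $u^+\mapsto n(u^+)=u_1$ I would prove by showing $u^+\mapsto t_1(u^+)$ is continuous and then invoking continuity of $m$ from Proposition~\ref{prop:cM}(b). If $u_n^+\to u^+$ and $t_n:=t_1(u_n^+)$, I would rule out $t_n\to 0$ (which by continuity of $m$ gives $J(m(t_nu_n^+))\to 0$, contradicting $J(m(t_nu_n^+))\ge J(m(ru_n^+))\to J(m(ru^+))\ge a$) and $t_n\to\infty$ (which by (I8) forces $J(m(t_nu_n^+))\to-\infty$, same contradiction). A convergent subsequence together with continuity of $J\circ m$ and the uniqueness step above identifies the limit as $t_1(u^+)$. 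Finally, $\cN=n(SX^+)$: any $u\in\cN$ satisfies $u^+\ne 0$, so setting $\hat u^+:=u^+/\|u^+\|$ places $u$ in $\R^+\hat u^+ +\tX$ as the unique critical point there with positive energy, hence $u=n(\hat u^+)$. The main obstacle I foresee is the uniqueness step: without the finely tuned identity (I9) providing strict concavity-like behavior along the rays $\R^+u+\tX$, nothing prevents additional critical points with positive energy, and the whole Nehari-Pankov picture would collapse.
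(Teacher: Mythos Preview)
The paper does not give a self-contained proof here but refers to \cite{BartschMederski2}, Proposition~4.1. Your overall strategy---existence of a maximizer on each half-space, uniqueness via (I9), continuity by compactness-plus-uniqueness---matches that approach, and your computation using (I9) to show that any $u\in\cN$ is the strict global maximum of $J$ on $\R^+u+\tX$ is clean and correct.

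There is, however, a genuine gap: you route both existence and continuity through the map $m$ of Proposition~\ref{prop:cM}, but that proposition requires (I5), which is \emph{not} among the hypotheses of Proposition~\ref{prop:nehari}. Theorem~\ref{ThLink1}(d), where (I5) is introduced as an \emph{additional} assumption, confirms that (I5) is not a consequence of (I1)--(I4), (I6), (I8), (I9). Without (I5) the minimizer of $I$ on $u^++\tX$ need not be unique, so $m$ (and hence your one-variable function $\phi(t)=(J\circ m)(tu^+)$) is not available. Note also that (I9) is only assumed for $u\in\cN$, so it cannot be used to recover (I5) on all of $\cM$.

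The fix is to bypass $m$ and maximize $J$ directly on the closed half-space $\R^+u^++\tX$. By (I6) the supremum is at least $a>0$; along a maximizing sequence $t_nu^++v_n$ one has $J\le\frac{t_n^2}{2}$ by (I1) so $t_n$ is bounded below, (I8) bounds $t_n$ above, then (I4) bounds $\|v_n\|$, and (I2) together with reflexivity give a $\cT$-limit point attaining the supremum. Positivity of the value forces $t_*>0$, so this maximizer lies in $\cN$, and then your (I9) step gives uniqueness. For continuity you can argue exactly as you did, but in the half-space rather than through $m$: boundedness of $n(u_n^+)$ as above, the weak subsequential limit identified as $n(u^+)$ via uniqueness of the maximizer, and the upgrade to norm convergence via (I3) once $I(n(u_n^+))\to I(n(u^+))$.
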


\begin{Rem}\label{rem:nehari}\rm
  a) Clearly, $n(-u^+)=u_2$ in Proposition~\ref{prop:nehari}.

  b) It is possible that $I$ has a critical point $v\in\tX$. Then the energy $J(v)=-I(v)\le0$ is non-positive and $v\notin\cN$.
\end{Rem}

Since $J$ is not required to be $\cC^2$ the Nehari-Pankov manifold is just a topological manifold homeomorphic to $SX^+$.
The following result is due to \cite{BartschMederski1}.

\begin{Th}\label{ThLink1}
Suppose $J \in \cC^1(X,\R)$ satisfies (I1)-(I4), (I6), (I8), (I9), and suppose $J$ is coercive on $\cN$, i.e.\ $J(u)\to\infty$ as $\|u\|\to\infty$ and $u\in\cN$. Then the following holds:
\begin{itemize}
\item[a)] $c_{\cN} := \inf_\cN J\ge a>0$ and $J$ has a $(PS)_{c_{\cN}}$-sequence in $\cN$.
\item[b)] If $J$ satisfies the $(PS)_{c_{\cN}}^\cT$-condition in $\cN$ then $c_{\cN}$ is achieved by a critical point of $J$.
\item[c)] If $J$ satisfies the $(PS)_c^\cT$-condition in $\cN$ for every $c$ and if $J$ is even then it has an unbounded sequence of critical values.
\item[d)] If in addition (I5) holds then $c_{\cM}\leq c_{\cN}$, and if $c_{\cM}$ is achieved by a critical point then $c_{\cM}=c_{\cN}$.
\end{itemize}
\end{Th}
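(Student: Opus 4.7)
The plan is to follow the Szulkin--Weth scheme and transfer the problem from the merely topological manifold $\cN$ to the Hilbert sphere $SX^+$ via the homeomorphism $n$ from Proposition~\ref{prop:nehari}. Although $n$ is only continuous, the composition $\Psi := J\circ n: SX^+\to\R$ will be of class $\cC^1$ with derivative
\[
  \Psi'(u^+)[\xi] = J'(n(u^+))[\xi] \qquad\text{for } \xi \in T_{u^+}SX^+ = \{\xi\in X^+:\langle u^+,\xi\rangle=0\},
\]
because $n(u^+)$ is a critical point of $J$ on $\R u^+\oplus\tX$, so tangential variations of $u^+$ contribute only through $X^+$--variations of $n(u^+)$. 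Combined with $J'(n(u^+))|_{\R u^+\oplus\tX}=0$, this identity transports $(PS)$--sequences, critical points, and $\Z/2$--symmetry between $\Psi$ on $SX^+$ and $J$ on $\cN$.

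\textbf{Part (a).} For any $u^+\in SX^+$, Proposition~\ref{prop:nehari} says $n(u^+)$ globally maximises $J$ on $\R^+u^++\tX$, hence
\[
  J(n(u^+)) \;\geq\; \sup_{t>0} J(tu^+) \;\geq\; \inf\{J(v): v\in X^+,\,\|v\|=r\} \;=\; a > 0
\]
by (I6), which gives $c_\cN\geq a$. Applying Ekeland's variational principle to $\Psi$ on the complete $\cC^{1,1}$--manifold $SX^+$ produces a sequence $u_n^+\in SX^+$ with $\Psi(u_n^+)\to c_\cN$ and $\Psi'(u_n^+)\to 0$. Setting $u_n:=n(u_n^+)\in\cN$, the derivative identity above combined with $J'(u_n)|_{\R u_n^+\oplus\tX}=0$ (built into the definition of $\cN$) yields $J'(u_n)\to 0$ in $X^*$.

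\textbf{Part (b) and the main obstacle.} Under $(PS)^\cT_{c_\cN}$, extract a subsequence with $u_n\cTto u$. The principal difficulty is to deduce $J'(u)=0$ and $J(u)=c_\cN$ despite $J'$ not being $\cT$--continuous. First, $u^+\neq 0$: if $u_n^+\to 0$ in norm then $J(u_n)\leq\tfrac12\|u_n^+\|^2\to 0$ by (I1), contradicting $J(u_n)\to c_\cN>0$. Next, apply (I9) with $u=u_n\in\cN$, $t=1$, $v=\tilde u-\tilde u_n\in\tX$, noting that $I'(u_n)|_\tX=0$ follows from $J'(u_n)|_\tX=0$, to obtain
\[
  I(u_n) \;<\; I(u_n^++\tilde u).
\]
Since $u_n^++\tilde u\to u$ in norm, continuity of $I$ gives $\limsup I(u_n)\leq I(u)$, which combined with (I2) forces $I(u_n)\to I(u)$. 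Then (I3) promotes $\cT$--convergence to $u_n\to u$ in norm, and norm--continuity of $J'$ delivers $J'(u)=0$ and $J(u)=c_\cN$; since $u^+\neq 0$, $u\in\cN$.

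\textbf{Parts (c) and (d).} For (c), evenness of $J$ forces $n(-u^+)=-n(u^+)$ by uniqueness in Proposition~\ref{prop:nehari}, so $\Psi$ is an even $\cC^1$--functional on the infinite--dimensional Hilbert sphere $SX^+$. A Lusternik--Schnirelmann argument using the Krasnoselskii genus then produces the unbounded sequence of critical values
\[
  c_k := \inf\bigl\{\sup\nolimits_A\Psi : A\subset SX^+\text{ closed and symmetric with }\mathrm{gen}(A)\geq k\bigr\},
\]
where $c_k\to\infty$ uses coercivity of $J$ on $\cN$ and a $\cT$--adapted deformation lemma valid under $(PS)^\cT$. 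For (d), the additional assumption (I5) makes Proposition~\ref{prop:cM} applicable, providing the homeomorphism $m:X^+\to\cM$. Given $u=n(u^+)\in\cN$, by (I8) one has $J(m(tu^+))=\tfrac{t^2}{2}-I(m(tu^+))\to-\infty$ as $t\to\infty$, so choose $R>r$ with $J(m(Ru^+))<a$. The path $\gamma(s):=m(sRu^+)$, $s\in[0,1]$, lies in $\cM$ with $\gamma(0)=0$ (as $0$ minimises $I$), $\|\gamma(1)^+\|=R>r$, and $J(\gamma(1))<a$, so $\gamma\in\Gamma$. Since $\gamma$ stays in $\R^+u^++\tX$, where $n(u^+)$ is the global $J$--maximiser, $\max_s J(\gamma(s))=J(u)$, and infimising over $u^+$ gives $c_\cM\leq c_\cN$. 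Conversely, any critical point $u^*$ attaining $c_\cM>0$ cannot lie in $\tX$ (where $J\leq 0$), so $u^*\in\cN$ and $c_\cM=J(u^*)\geq c_\cN$, giving equality.
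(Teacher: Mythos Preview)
Your approach is the Szulkin--Weth scheme, which is precisely the route the paper takes (it attributes the result to \cite{BartschMederski1} and explicitly says ``This is the approach from \cite{SzulkinWethHandbook}''); your argument for part (b) using (I9) with $t=1$ to upgrade $\cT$-convergence to norm convergence via (I2)--(I3), and your treatment of (d) via the path $s\mapsto m(sRu^+)$ matching Remark~\ref{rem:cNsubsetcM}, are exactly in line with the intended proof.

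There is one technical slip worth correcting. The derivative formula for $\Psi=J\circ n$ should read
\[
  \Psi'(u^+)[\xi] \;=\; \|n(u^+)^+\|\cdot J'(n(u^+))[\xi] \qquad \text{for } \xi\in T_{u^+}SX^+,
\]
not $\Psi'(u^+)[\xi]=J'(n(u^+))[\xi]$. Writing $n(u^+)=t_{u^+}u^++v$ with $t_{u^+}>0$, $v\in\tX$, the sandwich argument (comparing $J(n(u^+))$ with $J(t_{u^+}(u^++s\xi)+v)$ from below and with the analogous expression at $u^++s\xi$ from above) produces the factor $t_{u^+}=\|n(u^+)^+\|$; this is exactly Corollary~33 in \cite{SzulkinWethHandbook}. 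Fortunately this does not damage your conclusions: along a $(PS)_{c_\cN}$-sequence one has $t_{u_n^+}^2\ge 2J(u_n)\to 2c_\cN\ge 2a>0$, while coercivity of $J$ on $\cN$ bounds $t_{u_n^+}\le\|u_n\|$ from above, so $\Psi'(u_n^+)\to 0$ is equivalent to $J'(u_n)|_{T_{u_n^+}SX^+}\to 0$, and together with $J'(u_n)|_{\R u_n^+\oplus\tX}=0$ you still obtain $J'(u_n)\to 0$ in $X^*$. The same boundedness of the factor is what makes the transfer of the Palais--Smale condition and the Lusternik--Schnirelmann argument in (c) go through.
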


\begin{Rem}\label{rem:cNsubsetcM}
  a) If (I1)-(I6), (I8), (I9) hold then $\cN\subset\cM$ divides $\cM$ into two components. In fact, for $u\in SX^+$ there exists a unique $t_u>0$ such that $n(u)=t_uu+v$ with $v\in\wt X$. Then
  \[
  \cM\setminus\cN=\{m(tu):u\in SX^+,\, 0\le t<t_u\}\cup\{m(tu):u\in SX^+,\, t>t_u\}.
  \]
  The map $\be_u:[0,\infty)\to\R$ defined by $\be_u(t)=J(m(tu))$ achieves its maximum at $t_u>0$. If $\be_u'(t)=J'(m(tu))[u]=0$ then $J'(m(tu))|_{\R u\oplus\wt X}=0$, hence $m(tu)\in\cN$ and $t=t_u$. It follows that $\be_u(t)$ is strictly increasing on $[0,t_u]$ and strictly decreasing on $[t_u,\infty)$. Now it is easy to see that a mountain pass solution for $J\circ m$ corresponds to a minimizer of $J\circ n$.

  b) As mentioned after Theorem~\ref{thm:mpt} condition (I8) applies for $I$ being superquadratic. If $I$ is asymptotically quadratic one can still define $\cN$. Then $\cN$ cannot be parametrized over $SX^+$ but only over a subset of $SX^+$. If $\cN\ne\emptyset$ one can still obtain a critical point of $J$ via minimization over $\cN$. This has been done in \cite{Qin-Tang:2016}.
\end{Rem}

\section{The bounded domain case}\label{sec:res-bounded}

In this section we consider the curl-curl equation \eqref{eq:main} on a bounded Lipschitz domain $\Om$, and we present results from \cite{BartschMederski1,BartschMederski2,Qin-Tang:2016,Tang-Qin:2016}. Recall the hypotheses (L1), (F1)-(F3) from Section~\ref{sec:var} which imply that the functional
\[
  J(u) = \frac12\int_\Om\langle\mu(x)^{-1}\curlop u,\curlop u\rangle\, dx
            - \frac12\int_\Om \langle V(x)u, u\rangle\, dx - \int_\Om F(x,u)\,dx
\]
from \eqref{eq:action} is defined and of class $\cC^1$ on the space $X = H_{V,0}(\curl;\Om)\cap L^p(\Om,\R^3)$. Critical points of $J$ are weak solutions of \eqref{eq:main}. We also recall the Helmholtz decomposition $H_{V,0}(\curl;\Om)=\cV_0\oplus\cW_0$ from \eqref{eq:Helmholtz}, and the corresponding Helmholtz condition $X=\cV\oplus\cW$ where $\cV=\cV_0\cap X$ and $\cW=\cW_0\cap X$. We need one more assumption concerning this decomposition.
\begin{itemize}
\item[(L2)] $\cV_0$ is compactly embedded into $L^p(\Om,\R^3)$ for $p\in(2,6)$ from (F3).
\end{itemize}
This implies of course that $\cV=\cV_0$.

\begin{Rem}
  Recall that (L1) implies that $\cV_0$ embeds compactly into $L^2(\Om,\R^3)$ by \cite[Theorem~4.7]{Bauer-Pauly-Schomburg:2016}. It seems to be open whether (L1) implies (L2). Clearly (L2) follows if $\cV_0$ embeds into $H^1(\Om,\R^3)$. This has been proved for $V=\id_{3\times 3}$ and $\partial \Om$ of class $\cC^{1,1}$, or $\Om$ convex, in \cite[Theorems~2.12, 2.17]{Amrouche}. Costabel et al.\ \cite{CostabelDN1999} and Hiptmair \cite[Section 4]{Hiptmair} proved the embedding $\cV_0\subset H^1(\Om,\R^3)$ for Lipschitz domains admitting singularities and for isotropic and piecewise constant $V$. It also holds if $V$ is Lipschitz continuous and $\Om$ has $\cC^2$ boundary, as shown in \cite[Proposition~3.1]{BartschMederski2}. It is known that the space
  \[
  X_N(\Om) := \left\{E\in H_0(\curl;\Om): \div(E)\in L^2(\Om,\R^3)\right\}
  \]
  embeds continuously into $H^{\frac12}(\Om,\R^3)$, hence compactly into $L^p(\Om,\R^3)$ for $p<3$; see \cite[Theorem~2]{Costabel}.
\end{Rem}

We want to apply the critical point theory from Section~\ref{sec:CriticalTheory} to $J$. First we need to find a decomposition $X=X^+\oplus\tX$ so that $J(u)=\frac12\|u^+\|^2-I(u)$ is as in \eqref{EqJ}. This is of course determined by the quadratic part of $J$ given by the form
\[
  X=\cV\oplus\cW\to\R,\quad v+w \mapsto
    \int_\Om \left(\langle\mu(x)^{-1}\curlop v,\curlop v\rangle - \langle V(x)v,v\rangle
                   - \langle V(x)w,w\rangle\right)\,dx.
\]
Since $V>0$ the space $X^+\subset\cV$ is the positive eigenspace of the quadratic form
\[
  Q(v) = \int_\Om \left(\langle\mu(x)^{-1}\curlop v,\curlop v\rangle - \langle V(x)v,v\rangle\right)\,dx
\]
from \eqref{eq:DefofQ}, i.e.\ $X^+=\cV^+$ is the sum the eigenspaces of eigenvalues $\la_j>1$ of the curl-curl source eigenvalue problem \eqref{EgEigenvalue}. And $\tX=\tcV\oplus\cW$ where $\tcV$ is the (finite-dimensional) span of the eigenspaces of \eqref{EgEigenvalue} corresponding to the eigenvalues $\la_j\le1$. For $v\in\V$ we denote by $v^+\in\V^+$ and $\tv \in\tcV$ the corresponding summands such that $v=v^++\tv$. Then $J$ has the form
\[
  J(v+w) = \frac12 Q(v^+) + \frac12 Q(\tv) - \frac12\int_{\Om}\langle V(x)w,w\rangle\,dx - \int_\Om F(x,v+w)
\]
As a consequence on Lemma~\ref{lem:evp} we can define a new equivalent norm on $X^+$ by $\|v\|^2 := Q(v)$ so that $J$ has the form $J(v+w) = \frac12\|v^+\|^2 - I(v+w)$ as in \eqref{EqJ} with
\[
  \begin{aligned}
    I(v+w)
      &= -\frac12Q(\tv)^2 + \frac12\int_{\Om}\langle V(x)w,w\rangle\,dx + \int_\Om F(x,v+w)\\
      &= -\frac12Q(\tv)^2 + \frac12\|w\|_V^2 + \frac12\int_\Om\langle V(x)v,v\rangle\,dx
          + \int_\Om F(x,v+w).
  \end{aligned}
\]
Observe that $I$ is (strictly) convex if $F(x,u)$ is (strictly) convex in $u\in\R^3$ because $Q$ is negative semi-definite on $\tcV$. If $Q$ is even negative definite on $\tcV$, i.e.\ \eqref{EgEigenvalue} does not have an eigenvalue $1$, then $I$ is strictly convex provided $F$ is just convex in $u$.

Our first main result of this section deals with the case of a superlinear nonlinearity. We require two more assumptions in addition to (F1)-(F4).

\begin{itemize}
  \item[(F6)] There exists $\ga>2$ such that $\langle f(x,u),u\rangle \ge \ga F(x,u)\geq 0$ for all $ u\in\R^3$, and $\displaystyle \essinf_{x\in\Om,|u|=r} F(x,u)>0$ for some $r>0$.
  \item[(F7)] $F(x,u)$ is convex in $u$ for a.e.\ $x\in\Om$, and strictly convex in $u$ for a.e.\ $x$ if $1$ is an eigenvalue of \eqref{EgEigenvalue}.
\end{itemize}

Observe that (F6) implies
\begin{itemize}
  \item[(F8)] $F(x,u)\geq 0$ for a.e. $x\in\Om$, $u\in\R^3$ and there exists a constant $d>0$ such that $\displaystyle \liminf_{|u|\to\infty}\frac{F(x,u)}{|u|^\ga} \ge d$ for a.e.\ $x\in\Om$.
\end{itemize}

\begin{Th}\label{thm:main1}
Suppose (L1)-(L2) and (F1)-(F3), (F6)-(F7) hold.

a) Equation \eqref{eq:main} has a nontrivial solution $u\in X$.

b) If $F$ is even in $u$ then \eqref{eq:main} has a sequence of solutions $u_n$ with $J(u_n)\to\infty$.
\end{Th}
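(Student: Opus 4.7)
The plan is to apply the abstract critical point theorem (Theorem~\ref{thm:mpt}) to the decomposition $X = X^+\oplus\tX$ with $X^+=\cV^+$ and $\tX=\tcV\oplus\cW$ described just before the statement. Endowing $X^+$ with the inner product inherited from $Q$, the functional takes the form $J(u) = \tfrac12\|u^+\|^2 - I(u)$ with
\[
  I(v+w) = -\tfrac12 Q(\tv) + \tfrac12\|w\|_V^2 + \int_\Om F(x,v+w)\,dx.
\]
Then part a) follows from Theorem~\ref{thm:mpt}(b) and part b) from Theorem~\ref{thm:mpt}(c), using Proposition~\ref{prop:cM}(g) to transfer the symmetry of $J$ to $J\circ m$ when $F$ is even in $u$.

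First I would verify (I1)--(I8) for $I$. Nonnegativity $I\ge I(0)=0$ in (I1) holds because $F\ge0$ by (F6), $-Q(\tv)\ge0$ by Lemma~\ref{lem:evp}, and $\|w\|_V^2\ge0$. The $\cT$-lower semicontinuity (I2) combines Fatou on $\int F\,dx$ with weak lower semicontinuity of $\|w\|_V^2$ and finite-dimensionality of $\tcV$; condition (I3) follows from the same ingredients together with the compact embedding $\cV\hookrightarrow L^p$ provided by (L2), which promotes weak $L^p$-convergence plus convergence of $\int F\,dx$ to strong $L^p$-convergence. Coercivity (I4) and superquadraticity (I8) follow from the Ambrosetti--Rabinowitz bound (F6) and the lower bound $F(x,u)\ge d|u|^\ga-C$ from (F8). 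Strict convexity on $\tX$ (I5) comes from (F7): either $Q$ is strictly negative on $\tcV$ (when $1$ is not an eigenvalue of \eqref{EgEigenvalue}) with $F$ convex, or $Q$ is merely semi-definite and $F$ is strictly convex; in the $w$-direction strict convexity of $\|\cdot\|_V^2$ handles the argument. Finally, the mountain pass geometry (I6) follows from (F2)--(F3) via the Sobolev-type embedding of $\cV^+$ into $L^p$.

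The main obstacle is the $(PS)_c^\cT$-condition in $\cM$. For $(u_n)=(v_n^++\tv_n+w_n)\subset\cM$ with $J(u_n)\to c$ and $J'(u_n)\to 0$, the Ambrosetti--Rabinowitz identity applied to $J(u_n)-\tfrac1\ga J'(u_n)[u_n]$, together with $J'(u_n)|_\cW=0$ (which on $\cW$ reads $\|w_n\|_V^2=-\int\langle f(x,u_n),w_n\rangle\,dx$ and is controlled via (F3)), yields uniform bounds on $\|v_n^+\|$, on $\tv_n$, and on $\|w_n\|_V$. After extracting a subsequence, $v_n^+\weakto v^+$ in $X^+$, $\tv_n\to\tv$ in the finite-dimensional space $\tcV$, $w_n\weakto w$ in $\cW$, and crucially $v_n\to v$ strongly in $L^p$ by (L2). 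Testing $J'(u_n)\to 0$ against $v_n^+-v^+\in X^+$ gives
\[
  \langle v_n^+, v_n^+-v^+\rangle = J'(u_n)[v_n^+-v^+] + \int_\Om\langle f(x,u_n), v_n^+-v^+\rangle\,dx \to 0,
\]
because $(f(x,u_n))$ is bounded in $L^{p'}$ by (F3) while $v_n^+-v^+\to 0$ in $L^p$; hence $\|v_n^+\|^2\to\|v^+\|^2$ and, together with weak convergence in the Hilbert space $X^+$, $v_n^+\to v^+$ strongly. This establishes $u_n\cTto u$ and hence the $(PS)_c^\cT$-condition at every level $c$. Theorem~\ref{thm:mpt}(b) then produces the nontrivial critical point of part a), while in part b) the evenness of $J\circ m$ together with (I8) lets Theorem~\ref{thm:mpt}(c) supply an unbounded sequence of critical values, corresponding via Proposition~\ref{prop:cM}(f) to solutions $u_n$ of \eqref{eq:main} with $J(u_n)\to\infty$.
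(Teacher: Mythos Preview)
Your proposal is correct and follows essentially the same approach as the paper, which simply states that the theorem follows from Theorem~\ref{thm:mpt} after verifying (I1)--(I8) and refers to \cite[Section~5]{BartschMederski2} for the details; your sketch in fact supplies more of those details than the paper itself. One minor correction: in your verification of (I2) Fatou's lemma is not directly available because $w_n\weakto w$ in $\cW$ need not imply a.e.\ convergence of $u_n$; instead use the convexity of $F(x,\cdot)$ from (F7), which makes $u\mapsto\int_\Om F(x,u)\,dx$ sequentially weakly lower semicontinuous on $L^p$ and, combined with $v_n\to v$ strongly in $L^p$ via (L2), yields the required $\cT$-lower semicontinuity.
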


The theorem follows from Theorem~\ref{thm:mpt}. Details of the proof of the assumptions (I1)-(I8) of Theorem~\ref{thm:mpt} can be found in \cite[Section~5]{BartschMederski2}. In \cite{BartschMederski2} it was even allowed that $F(x,u)=0$ for $|u|>0$ small. This models materials where the polarization is linear if the intensity of the electric field $\cE$ is small. There we also address the existence of a ground state being defined as a solution of \eqref{eq:main} with positive energy that has the least energy among all solutions with positive energy. This requires an additional condition on $F$.

\begin{Rem}\rm
If $u=v+w\in X$ is a nontrivial solution of \eqref{eq:main} with $v\in\cV$ and $w\in\cW$ then necessarily $v^+\ne0$. This is a simple consequence of (L1) and (F6)-(F7). In fact, testing \eqref{eq:main} with $v+w$ and using the positivity of $V$ as well as the convexity of $F$ yields:
\[
  Q(v^+) = \int_\Om \langle V(x)(\tv+w),\tv+w\rangle\,dx + \int_\Om \langle f(x,v+w),v+w\rangle\,dx > 0.
\]
\end{Rem}

As mentioned in Section~\ref{sec:CriticalTheory} the approach via the Nehari-Pankov manifold requires another type of condition that we discuss next.
\begin{itemize}
  \item[(F9)] $\frac{t^2-1}{2}f(x,u)[u] + tf(x,u)[v] + F(x,u) - F(x,tu+v) \le 0$ for all $t\ge0$, $u,v\in\R^3$, a.e.\ $x\in\Om$, and the strict inequality holds if $u\ne tu+v$.
\end{itemize}

\begin{Rem}\label{rem:F9}\rm
  a) Versions of (F9) appear in \cite[Proof of Lemma~5.2]{BartschMederski1}, \cite[Condition~(B3)]{BartschMederski2}, \cite[Condition~(F4)]{Qin-Tang:2016}, \cite[Lemma~38]{SzulkinWethHandbook}, \cite[Condition~(F7')]{Tang-Qin:2016}.

  b) In the scalar case $N=1$ it has been proved in \cite[Lemma~38]{SzulkinWethHandbook} that (F9) follows from $f(x,u)=o(u)$ as $u\to0$ and:
  \[
    \R\ni u \mapsto \frac{f(x,u)}{|u|}\in\R \quad\text{is strictly increasing on $(-\infty,0)$ and on $(0,\infty)$.}
  \]
  This is the typical condition for setting up the classical Nehari manifold. For $N\ge2$ no version of this condition is known to imply (F9).

  c) Observe that $F:\Om\times\R^3\to\R$ is strictly convex in $u\in\R^3$ if (F9) holds. In order to see this consider the map $g(s):=F(x,(1-s)u_0+su_1)$ for given $u_0\ne u_1\in\R^N$, $x\in\Om$. Then (F9) yields for $0\le s<r\le1$:
  \[
    \begin{aligned}
      g'(s)(r-s) &= f(x,(1-s)u_0+su_1)[(r-s)(u_1-u_0)]\\
                 &< F(x,(1-r)u_0+ru_1) - F(x,(1-s)u_0+su_1) = g(r)-g(s).
    \end{aligned}
  \]
  Here we applied (F9) with $t=1$, $u=(1-s)u_0+su_1$, $v=(r-s)(u_1-u_0)$. This implies the strict convexity of $g$, hence of $F$. As a consequence, if (F9) holds then the functional $J(u)=\frac12 \|u^+\|^2-I(u)$ with $I(u)=\int_\Om F(x,u(x))\,dx$ satisfies (I5).

  d) In \cite{BartschMederski1} the following condition has been used instead of (F9):
  \begin{itemize}
    \item[(*)] If $ f(x,u)[v] = f(x,v)[u] > 0\ $ then
     $\ \displaystyle F(x,u) - F(x,v) \le \frac{(f(x,u)[u])^2-(f(x,u)[v])^2}{2f(x,u)[u]}$.\\
     If in addition $F(x,u)\ne F(x,v)$ then the strict inequality holds.
  \end{itemize}
  Actually in \cite{BartschMederski1} the condition was a bit stronger in that the weak inequality in (*) was also required if $f(x,u)[v] = f(x,v)[u] < 0$. However for the proof of \cite[Lemma~5.2]{BartschMederski1} only (*) is needed. It has been proved in \cite[Lemma~5.2]{BartschMederski1} that (F1), (F2), (*) and
  \begin{itemize}
    \item[(**)]  $F$ is strictly convex in $u\in \R^3$, and for any $x\in\R^3$, any $u\in\R^3$, $u\neq 0$:
	\[ F'(x,u)[u]> 2 F(x,u). \]
  \end{itemize}
  imply (F9). On the other hand, (F1), (F2), (F9) and (**) imply (*). Indeed, suppose that $f(x,u)\ne 0$, hence $f(x,u)[u]>0$ by the convexity of $F$. Then setting $w=-tu+v$ and $t=\frac{f(x,u)[v]}{f(x,u)[u]}>0$, we get
  \[
  \begin{aligned}
    &\frac{(f(x,u)[v])^2-(f(x,u)[u])^2}{2f(x,u)[u]} + F(x,u) - F(x,v) \\
    &\hspace{1cm}
      = -\frac{t^2+1}{2}f(x,u)[u] + tf(u)[v] + F(x,u) - F(x,v) \\
    &\hspace{1cm}
      = \frac{t^2-1}{2}f(x,u)[u] + tf(x,u)[w] + F(x,u) - F(x,tu+w) \le 0.
  \end{aligned}
  \]
  In particular, if $\langle f(x,u),v\rangle=\langle f(x,v),u\rangle>0$, then $f(x,u)\neq 0$ and by (**), $f(x,u)u>0$. Moreover, if in addition $F(x,u)\neq F(x,v)$ then $u\neq v= tu+(-tu+v)$ and by (F9) the strict inequality holds.
\end{Rem}

\begin{Th}\label{thm:main2}
  Suppose (L1)-(L2) and (F1)-(F3), (F8)-(F9) hold.

  a) Equation \eqref{eq:main} has a ground state solution $u\in X$, i.e.\ the minimum of the functional $J$ on the Nehari-Pankov manifold $\cN$ is achieved.

  b) If $F$ is even in $u$ then \eqref{eq:main} has a sequence of solutions $u_n$ with $J(u_n)\to\infty$.
\end{Th}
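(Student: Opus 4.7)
The plan is to derive both parts of the theorem from Theorem~\ref{ThLink1} applied to $J$ with the decomposition $X^+=\cV^+$ and $\tX=\tcV\oplus\cW$ fixed at the beginning of this section, so that $J(u)=\frac12\|u^+\|^2-I(u)$ with convex part $I(v+w) = -\frac12 Q(\tv)+\frac12\|w\|_V^2 + \int_\Om F(x,v+w)\,dx$. By Proposition~\ref{PropSolutE} critical points of $J$ coincide with weak solutions of \eqref{eq:main}, so the task reduces to verifying (I1)--(I4), (I6), (I8), (I9), the coercivity of $J$ on $\cN$, and the $(PS)_c^\cT$-condition in $\cN$.

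By Remark~\ref{rem:I8}(c), (F9) forces $F(x,\cdot)$ to be strictly convex, hence so is $I$, giving (I1) and (I5). Integrating (F9) over $\Om$ provides the $F$-part of (I9); the quadratic part of $I$ is positive semi-definite on $\tX$ (since $-Q\ge 0$ on $\tcV$ and $\|\cdot\|_V^2\ge 0$ on $\cW$), so by Remark~\ref{rem:I8}(a) it contributes non-positively and (I9) follows. Condition (I4) is a consequence of (F8) together with the positivity of $\|w\|_V^2$ and the equivalence of norms on the finite-dimensional space $\tcV$. For (I2) and (I3) I would use the compact embedding (L2) --- which gives $v_n\to v$ in $L^p(\Om,\R^3)$ along any $\cT$-convergent sequence --- combined with weak lower semicontinuity of the convex quadratic pieces and a Brezis--Lieb type splitting for the nonlinear term. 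The mountain pass bound (I6) is standard from (F2), (F3) and Lemma~\ref{lem:evp}. For (I8), if $t_n\to\infty$ and $u_n^+\to u^+\ne 0$, (L2) yields a subsequence with $u_n^+\to u^+$ pointwise a.e., and (F8) with Fatou forces $t_n^{-2}\int_\Om F(x,t_nu_n)\,dx\to\infty$ even when the $\tX$-components of $u_n$ are unbounded, dominating the bounded quadratic contributions to $I(t_nu_n)/t_n^2$.

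For coercivity of $J$ on $\cN$, given $(u_n)\subset\cN$ with $\|u_n\|\to\infty$, one uses that $n(z)$ is the global maximum of $J$ on $\R^+z+\tX$ (Proposition~\ref{prop:nehari}) to compare $J(u_n)$ with $J(tz+\phi)$ for a fixed $z\in SX^+$ and large $t$; a vanishing/non-vanishing alternative for $u_n/\|u_n\|$, combined with (F8) and (F9), forces $J(u_n)\to\infty$, as in the Szulkin--Weth framework \cite{SzulkinWethHandbook}. The main obstacle is the $(PS)_c^\cT$-condition in $\cN$. For $(u_n)=(v_n+w_n)\subset\cN$ with $J(u_n)\to c$ and $J'(u_n)\to 0$, coercivity yields a bounded sequence with weak limit $u=v+w$, and (L2) upgrades $v_n\to v$ to strong $L^p$-convergence. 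The delicate part is the curl-free piece $w_n\in\cW$: the differential operator in \eqref{eq:main} has an infinite-dimensional kernel and $J$ is concave in the $w$-direction, so weak convergence of $w_n$ must be promoted to strong $L^p$-convergence before (I3) can be invoked. Following \cite{BartschMederski1,BartschMederski2}, one tests $J'(u_n)$ against $w_n-w$ and exploits the strict monotonicity of $u\mapsto\nabla_uF(x,u)$ (from the strict convexity of $F$) together with the already-established strong convergence of $v_n$ to deduce $I(u_n)\to I(u)$; then (I3) gives $u_n\cTto u$. With this in hand, Theorem~\ref{ThLink1}(b) yields the ground state in~(a), and, when $F$ is even so that $J$ is even, Theorem~\ref{ThLink1}(c) yields the unbounded sequence of critical values in~(b).
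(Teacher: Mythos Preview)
Your proposal is correct and follows exactly the route the paper takes: apply Theorem~\ref{ThLink1} to $J$ with the splitting $X^+=\cV^+$, $\tX=\tcV\oplus\cW$, and verify (I1)--(I4), (I6), (I8), (I9), the coercivity of $J$ on $\cN$, and the $(PS)_c^{\cT}$-condition; the paper itself only states this and defers the verifications to \cite[Section~5]{BartschMederski2}, so your outline is in fact more detailed than what appears here. One small slip: the reference ``Remark~\ref{rem:I8}(c)'' should be Remark~\ref{rem:F9}~c) (Remark~\ref{rem:I8} has only parts a) and b)); also note that in the (I8) sketch the quadratic pieces $-\tfrac12 Q(\tv_n)$ and $\tfrac12|w_n|_V^2$ are nonnegative rather than ``bounded'', which only helps the conclusion.
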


Theorem~\ref{thm:main2} is a consequence of Theorem~\ref{ThLink1}; see \cite[Section~5]{BartschMederski2} for the proof of properties (I1)-(I4), (I6)-(I8), and the coerciveness of $J$ on $\cN$.

\begin{Rem}
  The first result for \eqref{eq:main} in a bounded domain with superlinear nonlinearity and metallic boundary condition \eqref{eq:bc} is due to \cite{BartschMederski1}. There $V(x)\equiv\la>0$ was a constant scalar, and $f(x,u)=f(u)$ was independent of $x\in\Om$. The proof was based on the Nehari-Pankov manifold approach. A closely related result using the same approach is due to \cite{Tang-Qin:2016}. In these papers one can find several classes of functions that satisfy the hypotheses, in particular (F9). The model nonlinearity is $F(x,u)=\Ga(x)|u|^p$ with $2<p<6$. Also sums of such functions are allowed.
\end{Rem}

Now we discuss asymptotically linear nonlinearities. So far these have only been considered in \cite{Qin-Tang:2016}. Here we present a variation of their main result \cite[Theorem~1.1]{Qin-Tang:2016} within our setting. The main difference is that $V(x)\equiv\la>0$ is a constant scalar in \cite{Qin-Tang:2016}.
\begin{itemize}
  \item[(F10)] There exists $V_\infty\in L^{\infty}(\Om,\R^{3\times 3})$ such that $f(x,u)=V_\infty(x)[u]+f_\infty(x,u)$ for a.e.\ $x\in\Om$, every $u\in\R^3$. Moreover, $\langle f(x,u),f_\infty(x,u)\rangle < 0$ for $u\ne0$, and $|f_\infty(x,u)|=o(|u|^\si)$ as $|u|\to\infty$ for some $\si\in(0,1)$ uniformly in $x\in\Om$.
  \item[(F11)] $\frac12\langle f(x,u),u\rangle - F(x,u) \to \infty$ as $|u|\to\infty$ uniformly in $x\in\Om$.
\end{itemize}

Similar to Theorem~\ref{thm:sym1-asy-lin} we consider the quadratic forms
\[
  Q_0(v) := \int_\Om \left(\langle\mu(x)^{-1}\curlop v,\curlop v\rangle - \langle V(x)v,v\rangle \right)\,dx,
\]
and
\[
  Q_\infty(v)
   := \int_\Om \left(\langle\mu(x)^{-1}\curlop v,\curlop v\rangle - \langle (V(x)+V_\infty(x))v,v\rangle \right)\,dx,
\]
on the space $\cV$.

\begin{Th}\label{thm:asy-lin}
  Suppose (L1), (F1), (F2), (F9)-(F11) hold. Moreover suppose that the quadratic forms $Q_0,Q_\infty:V\to\R$ are nondegenerate with indices $i_0,i_\infty\in\N_0$, respectively. Then $i_0\le i_\infty$, and if $i_0<i_\infty$ there exists a nontrivial solution $u\in X$ of \eqref{eq:main}. If in addition $F$ is even in $u$ then \eqref{eq:main} has at least $i_\infty-i_0$ different pairs of solutions $\pm u_n$.
\end{Th}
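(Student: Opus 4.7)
The plan is to realise $J$ in the abstract framework of Section~\ref{sec:CriticalTheory}, reduce to a $\cC^1$-functional on $\cV^+$ via Proposition~\ref{prop:cM}, and extract $i_\infty-i_0$ pairs of critical points from a linking/pseudo-index argument exploiting the jump in the negative Morse index between $0$ and $\infty$. With the decomposition $X=\cV^+\oplus(\tcV\oplus\cW)$ one has
\[
J(u)=\tfrac12\|u^+\|^2-I(u),\quad I(v+w)=-\tfrac12 Q(\tv)+\tfrac12\|w\|_V^2+\int_\Om F(x,v+w)\,dx.
\]
Since $Q_0$ is non-degenerate, $Q$ is strictly negative definite on $\tcV$; together with strict convexity of $F(x,\cdot)$ (from (F9) and Remark~\ref{rem:F9}(c)), this makes $I$ strictly convex. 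Hypotheses (I1)--(I5) of Section~\ref{sec:CriticalTheory} can then be verified as in \cite[Section~5]{BartschMederski2}, using (L1), the compact embedding $\cV\hookrightarrow L^2(\Om,\R^3)$ from Lemma~\ref{lem:evp}, and the linear growth of $f$ from (F1), (F2), (F10). Proposition~\ref{prop:cM} supplies a homeomorphism $m:\cV^+\to\cM$ whose critical points lift to critical points of $J$.

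To compare the indices, strict convexity of $F(x,\cdot)$ with $F(x,0)=0$ and $f(x,0)=0$ yields $\langle f(x,u),u\rangle\ge 0$. Writing $f=V_\infty u+f_\infty$, dividing by $|u|^2$, and letting $|u|\to\infty$ along a ray $u=t\hat u$, the term $\langle f_\infty,u\rangle=o(|u|^{1+\sigma})$ from (F10) vanishes in the limit, giving $\langle V_\infty(x)\hat u,\hat u\rangle\ge 0$ for every $\hat u\in\R^3$ and a.e.\ $x\in\Om$. Hence $V_\infty\ge 0$ pointwise a.e., so $Q_\infty\le Q_0$ on $\cV$, and the Courant--Fischer min-max principle gives $i_0\le i_\infty$.

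Assume now $i_0<i_\infty$. The reduced functional $J\circ m\in\cC^1(\cV^+,\R)$ has at the origin a non-degenerate Hessian essentially given by $Q_0$ restricted to $\cV^+$, of Morse index $i_0$; at infinity, (F10) and the continuity of $m$ yield that $J\circ m$ is asymptotic to a non-degenerate quadratic form of index $i_\infty$, by the same mechanism that underlies Theorem~\ref{thm:sym1-asy-lin} in the symmetric setting. Clark's theorem \cite{Clark:1972} produces a nontrivial critical point of $J\circ m$ from this index jump, while under evenness of $F$ in $u$ the pseudo-index multiplicity theorem of Bartolo-Benci-Fortunato \cite{Bartolo-etal:1983} yields at least $i_\infty-i_0$ pairs. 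Proposition~\ref{prop:cM}(f) and (g) transfer them back to solutions of \eqref{eq:main}.

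The main obstacle is the Palais-Smale condition for $J\circ m$ in this asymptotically linear regime, where the Ambrosetti-Rabinowitz condition fails. Here (F11) is decisive: for a $(PS)_c$-sequence $(u_n^+)\subset\cV^+$ set $u_n=m(u_n^+)$ and suppose for contradiction that $\|u_n\|\to\infty$. The identity
\[
J(u_n)-\tfrac12 J'(u_n)[u_n]=\int_\Om\left[\tfrac12\langle f(x,u_n),u_n\rangle-F(x,u_n)\right]dx
\]
is uniformly bounded, while (F11) forces its right-hand side to diverge along any subsequence for which $|u_n(x)|\to\infty$ on a set of positive measure. Using the non-degeneracy of $Q_\infty$ to exclude the vanishing alternative (via an eigenfunction-expansion argument for $u_n/\|u_n\|$) closes the contradiction; boundedness of $(u_n)$ together with Lemma~\ref{lem:evp} then yields the required compactness.
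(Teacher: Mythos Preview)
Your route differs substantially from the paper's. The paper does \emph{not} reduce via Proposition~\ref{prop:cM} to $J\circ m$ on $\cV^+$; instead it exploits (F9) to work directly on the Nehari--Pankov manifold $\cN$ of \eqref{eq:NehariDef}. The central observation, taken from \cite{Qin-Tang:2016}, is that because (I8) fails in the asymptotically linear regime, $\cN$ is parametrised by $n$ only over an open subset $\cO\subset SX^+$, and one shows that $\cO$ contains the sphere $S(X^+\cap\cV_\infty^-)$ of dimension at least $i_\infty-i_0-1$, where $\cV_\infty^-$ is the negative eigenspace of $Q_\infty$. The single solution then comes from minimising $J$ on $\cN$ (no evenness needed), and the $i_\infty-i_0$ pairs from Lusternik--Schnirelmann theory for the even functional $J\circ n$ on $\cO$. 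Your derivation of $V_\infty\ge0$ and $i_0\le i_\infty$ agrees with the paper's.

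The $J\circ m$ scheme you propose has concrete problems. First, the index bookkeeping is wrong: $\cV^+$ is by construction the \emph{positive} eigenspace of $Q_0$, so the Hessian of $J\circ m$ at the origin is positive definite with Morse index $0$, not $i_0$; the correct asymptotic index on $\cV^+$ is the Schur-complement index $i_\infty-i_0$, not $i_\infty$. The jump happens to come out the same, but the sentence as written is false. Second, Clark's theorem \cite{Clark:1972} is a multiplicity result for \emph{even} functionals and corresponds to the opposite inequality $i_0>i_\infty$ (cf.\ the remark after Theorem~\ref{thm:sym1-asy-lin}); it cannot supply the single critical point in the non-even case $i_0<i_\infty$ treated here. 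Third, and most seriously, the assertion that $J\circ m$ is ``asymptotic to a non-degenerate quadratic form'' is not established and does not follow from continuity of $m$: since $m(u^+)$ \emph{maximises} $J$ over $u^++\tX$, one gets $J(m(tu^+))\ge J(tu^+)$ for free, but the required \emph{upper} bound forcing $J(m(tu^+))\to-\infty$ along rays in a suitable $(i_\infty-i_0)$-dimensional subspace demands control of the nonlinear reduction at infinity that you have not provided. This difficulty is precisely why the paper abandons $\cM$ in favour of $\cN$; see Remark~\ref{rem:cNsubsetcM}(b).
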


Since the nonlinearity $F$ is asymptotically quadratic the functional is defined on $X=H_{V,0}(\curl;\Om)$ with Helmholtz decomposition $\cV_0\oplus\cW_0$. Since $\cV_0$ embeds compactly into $L^2_V(\Om,\R^3)$ we do not need to assume (L2).  Recall that Remark~\ref{rem:F9}~b) implies that $F$ is convex in $u$ as a consequence of (F9). Then the linearization $V_\infty$ of $f$ in (F2) must be positive semi-definite, so $Q_\infty\le Q_0$, hence $i_0\le i_\infty$.
The main observation from \cite{Qin-Tang:2016} is that the Nehari-Pankov manifold $\cN$ from \eqref{eq:NehariDef} is not homeomorphic to the unit sphere $SX^+$ but only to an open subset $\cO\subset SX^+$ which can be explicitly determined. The homeomorphism is given by the map $n:\cO\to\cN$ from \eqref{eq:def-n}. Let $\cV_\infty^-\subset\cV$ be the negative eigenspace associated to $Q_\infty$, so that $\dim(X^+\cap\cV_\infty^-)\ge i_\infty-i_0$. Then one can show that $\cO$ contains the set $S(X^+\cap\cV_\infty^-):=\{u\in X^+\cap\cV_\infty^-: \|u\|=1\}$. As before it is sufficient to find critical points of $J\circ n$. This map is bounded below and satisfies the Palais-Smale condition. Now Theorem~\ref{thm:asy-lin} follows by minimizing $J$ on $\cN$ in order to obtain the ground state, and from standard Lusternik-Schnirelmann theory for the multiple solutions.

\begin{Rem}\label{rem:sat}
  As in the symmetric case (see Remark~\ref{rem:sym1-sat}) Theorem~\ref{thm:asy-lin} applies in particular to the nonlinearity with saturation from \eqref{ex:sat}. This is a special case of a more general nonlinearity of the form $F(x,u)=\frac12 H(x,|u|^2)$ where $h=\pa_tH:\Om\times\R^+\to\R$ is a Carath\'eodory function satisfying appropriate conditions so that (F1), (F2), (F9)-(F11) hold; see the discussion in \cite{Qin-Tang:2016} after Theorem~1.1 where this class of examples has been presented.
\end{Rem}

We conclude this chapter with a short discussion of the symmetric situation, i.e.\ when assumption (S) from Section~\ref{sec:sym} holds in addition to the other assumptions of Theorems~\ref{thm:main1}, \ref{thm:main2}, \ref{thm:asy-lin}. We are especially interested in the existence of solutions of the form \eqref{eq:sym2}, i.e.\
\[
  u(x)=\be(r,x_3)\begin{pmatrix}x_1\\x_2\\0\end{pmatrix} + \ga(r,x_3)\begin{pmatrix}0\\0\\1\end{pmatrix}.
\]
Recall the action of the group $G=O(2)\times\{1\}\subset O(3)$ on $X$ from \eqref{eq:G-action} and the linear isometry $S_1:X^G\to X^G$ from \eqref{eq:def-S1}. Recall also that the fixed point space $(X^G)^{S_2}$ consists of fields of the form \eqref{eq:sym2}. Assumption (S) implies that $J$ is invariant under $G$ and under $S_2=-S_1$, hence it suffices to find critical points of $J|(X^G)^{S_2}$.

\begin{Th}\label{thm:sym2}
  Suppose in Theorems~\ref{thm:main1}, \ref{thm:main2} that the symmetry assumption (S) holds in addition to the other assumptions. Then these theorems remain true and yield solutions in $(X^G)^{S_1}$ and in $(X^G)^{S_2}$. The least energy solutions in Theorem~\ref{thm:main2} can be obtained by minimization on the Nehari-Pankov manifold in $(X^G)^{S_k}$, $k=1,2$. In the case of \ref{thm:asy-lin} the quadratic forms $Q_0,Q_\infty$ have to be considered on the space $X^+\cap (X^G)^{S_k}$, $k=1,2$.
\end{Th}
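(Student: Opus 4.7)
The plan is to invoke Palais' principle of symmetric criticality and apply the critical-point machinery of Section~\ref{sec:CriticalTheory} to the restriction of $J$ to the fixed-point subspace $Y_k:=(X^G)^{S_k}$, where the compactness improvements supplied by the symmetry let the unconstrained arguments run verbatim.

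First I would verify invariance. The $G$-action \eqref{eq:G-action} preserves $J$ by (S). The pointwise identities $|\curlop u|_{\mu^{-1}}^2=|\curlop u_\tau|_{\mu^{-1}}^2+|\curlop(u_\rho+u_\zeta)|_{\mu^{-1}}^2$, $|u|_V^2=|u_\tau|_V^2+|u_\rho|_V^2+|u_\zeta|_V^2$, and $F(x,u(x))=F(x,-S_1u(x))$ recalled in the proof of Theorem~\ref{thm:sym1-superlin} give $J(S_2u)=J(u)$ for every $u\in X^G$ with no evenness hypothesis, and combined with the evenness of $F$ that is assumed in the multiplicity statements of Theorems~\ref{thm:main1}, \ref{thm:main2}, \ref{thm:asy-lin} they also give $J(S_1u)=J(u)$. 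By Palais' principle it then suffices to find critical points of $J|_{Y_k}$.

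Next I would check that the abstract framework of Section~\ref{sec:CriticalTheory} descends to $Y_k$. Since $\mu,V$ commute with $G$ and are $G$-invariant, and $S_k$ respects the cylindrical splitting $u=u_\tau+u_\rho+u_\zeta$, both the Helmholtz decomposition \eqref{eq:Helmholtz} and the positive/negative spectral decomposition $\cV=\cV^+\oplus\tcV$ of the source problem \eqref{EgEigenvalue} are $G\cup\{S_k\}$-invariant and restrict to $Y_k=Y_k^+\oplus\widetilde Y_k$ with $Y_k^+\subset\cV^+$ and $\widetilde Y_k\subset\tcV\oplus\cW$. Consequently $J|_{Y_k}$ has the form \eqref{EqJ}, the pointwise-in-$(x,u)$ conditions (I1)--(I9) are inherited from $X$, and by uniqueness of the minimizer in $u^++\tX$ the reduction map $m$ of Proposition~\ref{prop:cM} commutes with the $G$- and $S_k$-actions, so it sends $Y_k^+$ into $\cM\cap Y_k$; the analogous statement holds for the Nehari-Pankov map of Proposition~\ref{prop:nehari}. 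A standard averaging argument shows moreover that any Palais-Smale sequence for $J|_{Y_k}$ is a Palais-Smale sequence for $J$.

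Finally I would establish the $(PS)^\cT$-condition on $\cM\cap Y_k$ and $\cN\cap Y_k$. For $k=1$ the inclusion $Y_1\subset\cV\cap H^1_0(\Om,\R^3)$ noted after \eqref{eq:S1-fp} yields a compact embedding into $L^p(\Om,\R^3)$ for every $p\in[2,6)$, from which the condition follows directly. For $k=2$ fields of the form \eqref{eq:sym2} carry non-trivial curl-free components and no direct compact embedding $Y_2\hookrightarrow L^p$ is available; one relies instead on the $\cM$-reduction of Proposition~\ref{prop:cM} to eliminate the $\cW$-direction by maximization and then invokes (L2) for the surviving $\cV$-part of $Y_2$. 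I expect this $k=2$ verification to be the most delicate step, precisely because the Helmholtz decomposition cannot be avoided and one has to combine the $\cM$-reduction with the $G\cup\{S_2\}$-equivariance of $\widetilde m$. Once this is in place, Theorem~\ref{thm:mpt} and Theorem~\ref{ThLink1} apply verbatim to $J|_{Y_k}$ and give the conclusions of Theorems~\ref{thm:main1} and \ref{thm:main2} with solutions in $Y_k$, while for Theorem~\ref{thm:asy-lin} the Lusternik-Schnirelmann minimax argument sketched there produces the required pairs of solutions, with $i_0,i_\infty$ replaced by the indices of the restricted forms $Q_0|_{X^+\cap Y_k},Q_\infty|_{X^+\cap Y_k}$.
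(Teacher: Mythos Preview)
Your proposal is correct and follows the route the paper itself indicates in the paragraph immediately preceding the theorem: invoke symmetric criticality for the $G$- and $S_k$-actions, check that the decomposition $X=X^+\oplus\tX$ and the maps $m,n$ of Propositions~\ref{prop:cM} and \ref{prop:nehari} restrict to $Y_k=(X^G)^{S_k}$ (which they do by uniqueness and the orthogonality $|\curlop u|_{\mu^{-1}}^2=|\curlop u_\tau|_{\mu^{-1}}^2+|\curlop(u_\rho+u_\zeta)|_{\mu^{-1}}^2$), and then rerun Theorems~\ref{thm:mpt} and \ref{ThLink1} on $Y_k$. The paper supplies no further details, so your elaboration is the intended argument; the only overstatement is flagging the $k=2$ case as delicate, since the $(PS)^\cT$ verification there is nothing more than the full-space argument behind Theorems~\ref{thm:main1}--\ref{thm:asy-lin} (the $\cM$-reduction together with (L2)) restricted to the closed invariant subspace $Y_2$, with no additional difficulty beyond what has already been done without symmetry.
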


\section{The case $\Om=\R^3$}\label{sec:R^3}

The first results about solutions of
\begin{equation}\label{eq:mainR3}
  \curlop\curlop u - V(x)u = f(x,u) = \nabla_uF(x,u) \qquad\text{in $\R^3$,}
\end{equation}
are due to \cite{BenForAzzAprile, DAprileSiciliano} in the case $V=0$ and $F(x,u)=\frac12W(|u|^2)$ where $W:\R^+\to\R$ grows supercritically for $|u|\le1$ and subcritically for $|u|>1$; cf.\ condition (F14) below. Equation \eqref{eq:mainR3} then has the form \eqref{eq:BF}. Clearly such $F$ satisfies the symmetry condition (S) and is even. In view of Theorem \ref{thm:radial}, the assumption of full radial symmetry does not lead to interesting solutions of \eqref{eq:mainR3}. Solutions of the form \eqref{eq:sym1} are obtained in \cite{BenForAzzAprile}, solutions of the form \eqref{eq:sym2} in \cite{DAprileSiciliano}, of course under appropriate hypotheses on $W$. The basic approach in these papers consists in minimizing $J$ on the constraint $\{u: W(|u|^2)=1\}$ in a suitable subspace of $\D^{1,2}(\R^3,\R^3)$. Such a minimizer $v$ leads to a Lagrange multiplier in the equation that can be scaled away by considering $u(x)=v(\al x)$ for a certain choice of $\al>0$. This does not work in the nonautonomous case, hence we just refer the reader to \cite{BenForAzzAprile,DAprileSiciliano} for details of this approach.

\subsection{Results in the cylindrically symmetric setting}

Throughout this section we impose the symmetry condition (S), i.e.\ $V=(r,x_r)$ and $F=F(r,x_3)$. We also recall the following conditions on the nonlinear term $F$:
\begin{itemize}
  \item[(F1)] $F:\R^3\times\R^3\to\R$ is differentiable with respect to $u\in\R^3$, such that $f=\nabla_uF:\R^3\times\R^3\to\R^3$ is a Carath\'eodory function (i.e.\ measurable in $x\in\Om$, continuous in $u\in\R^3$ for a.e.\ $x\in\R^3$). Moreover, $F(x,0)=0$ for a.e.\ $x\in\R^3$.
  \item[(F2)] $|f(x,u)|=o(|u|)$ as $u\to0$ uniformly in $x\in\R^3$.
  \item[(F3)] There exist $2<p<6$ and a constant $c>0$ such that
	\[
	  |f(x,u)|\le c(1+|u|^{p-1})\qquad\text{for all } x\in\R^3, u\in\R^3.
	\]
  \item[(F9)]$\frac{t^2-1}{2}F'(x,u)[u] + tF'(x,u)[v] + F(x,u) - F(x,tu+v) \leq 0$ for all $t\ge0$, $u,v\in\R^3$, a.e.\ $x\in\R^3$, and the strict inequality holds if $u\neq tu+v$.
  \item[(F12)] $F(x,u)/|u|^2\to\infty$ as $|u|\to\infty$ uniformly in $x\in\R^3$.
\end{itemize}

We shall look for critical points of
\begin{equation}\label{eq:actionR3}
  J(u) = \frac12\int_{\R^3}|\curlop u|^2\, dx - \frac12\int_{\R^3} V(x)|u|^2\, dx - \int_{\R^3} F(x,u)\,dx
\end{equation}
defined on the space
\begin{equation}\label{eq:def-Xp}
  X:=\D(\curl,p)\cap L^2_{|V|}(\R^3,\R^3),
\end{equation}
where $\cD(\curl,p)$ is the completion of $\cC_0^{\infty}(\R^3,\R^3)$ with respect to the norm
\[
  \|u\|_{\curl,p}:=(|\curlop u|^2_2+|u|_p^2)^{1/2}
\]
and $L^2_{|V|}(\R^3,\R^3)$ is the space of square integrable vector fields with respect to the measure $|V|\,dx$.

Since (S) holds we consider as in Section~\ref{sec:sym} the action of $G=O(2)\times\{1\}\subset O(3)$ on $X$ from \eqref{eq:G-action}, and the isometry $S_1:X^G \to X^G$ from \eqref{eq:def-S1}. If $F$ is in addition even in $u$ then as before it is sufficient to find critical points of $J$ constrained to the fixed point set
\[
  \begin{aligned}
    (X^G)^{S_1} &:= \{u\in X^G: S_1(u)=u\} = \{u\in X^G: u=u_\tau\}\\
                &\subset \{u\in H(\curl;\R^3): \div(u) =0\} = \{u\in H^1(\R^3,\R^3): \div(u) =0\}.
  \end{aligned}
\]
Similarly we can define subspaces
\[
  (H^2(\R^3,\R^3)^G)^{S_1}\subset \{u\in H^2(\R^3,\R^3): \div(u) =0\}
\]
and
\[
  (L^2(\R^3,\R^3)^G)^{S_1}\subset L^2(\R^3,\R^3).
\]

If $V\in L^\infty(\R^3)$ then in view of \cite[Lemma 4.4]{BDPR:2016}, the operator $\cL := (\curlop\curlop)+V$ defined on
\[
  D(\cL) = (H^2(\R^3,\R^3)^G)^{S_1}\subset (L^2(\R^3,\R^3)^G)^{S_1}\to (L^2(\R^3,\R^3)^G)^{S_1}
\]
is selfadjoint.

We also require the following periodicity condition.
\begin{itemize}
  \item[(P)] $V$ and $F$ are $1$-periodic in $x_3$, i.e.\ $V(r,x_3)=V(r,x_3+1)$, $F(r,x_3,|u|,u_3)=F(r,x_3+1,|u|,u_3)$ for a.a. $r>0, x_3\in \R$, $u\in\R^3$.
\end{itemize}

\begin{Th}\label{Th1SymR^3}
  Assume that (S), (P) hold, that $V\in L^\infty(\R^3)$, and that $F$ is even in $u$ and satisfies (F1)-(F3), (F9), (F12). If $0\notin \si(\cL)$ then the equation \eqref{eq:mainR3} has a ground state solution in $(X^G)^{S_1} $, which is a minimizer of $J$ on the associated Nehari-Pankov manifold $\cN\subset (X^G)^{S_1} $.
\end{Th}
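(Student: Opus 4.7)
The plan is to restrict \eqref{eq:mainR3} to the symmetric subspace $(X^G)^{S_1}$, where the curl--curl operator simplifies to the vector Laplacian, and then apply the Nehari--Pankov machinery of Theorem~\ref{ThLink1}; the non-compactness of $\R^3$ will be overcome via integer translations in $x_3$, using the periodicity (P). By (S) and the evenness of $F$, the functional $J$ is invariant under $G=O(2)\times\{1\}$ and under $S_1$, so the principle of symmetric criticality reduces the search for critical points of $J$ on $X$ to that for critical points of $J|_{(X^G)^{S_1}}$. Any $u\in(X^G)^{S_1}$ has the form $u(x)=\al(r,x_3)(-x_2,x_1,0)^T$; a direct computation gives $\div u=0$ pointwise, so $\curlop\curlop u=-\De u$ and $(X^G)^{S_1}$ embeds continuously into $H^1(\R^3,\R^3)$. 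The restricted functional reads
\[
J(u)=\tfrac12\int_{\R^3}|\nabla u|^2\,dx-\tfrac12\int_{\R^3}V(x)|u|^2\,dx-\int_{\R^3}F(x,u)\,dx,
\]
with linear part governed by the selfadjoint operator $\cL$ from the hypothesis.

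The spectral gap $0\notin\si(\cL)$, together with the commutation of $\cL$ with $G$ and $S_1$, yields a topological direct sum $(X^G)^{S_1}=X^+\oplus\tX$ into the positive and negative spectral subspaces of $\cL$. Equipping $X^+$ with the natural Hilbert norm from the spectral form we write $J(u)=\tfrac12\|u^+\|^2-I(u)$ with $I(u)=\tfrac12\|\tu\|^2+\int_{\R^3}F(x,u)\,dx$, as in \eqref{EqJ}. Using (F1)--(F3), (F9), (F12) and Remark~\ref{rem:F9}(c), hypotheses (I1)--(I6), (I8), (I9) of Section~\ref{sec:CriticalTheory} may be verified as in \cite[Section~5]{BartschMederski2}: (F9) gives strict convexity of $F$ and hence (I5) and (I9); (F12) yields the superquadratic condition (I8) and, together with the spectral splitting, the coercivity (I4); (F2)--(F3) give (I1)--(I3) and the mountain-pass geometry (I6). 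Coercivity of $J|_\cN$ is standard. Theorem~\ref{ThLink1}(a) then supplies a bounded $(PS)_{c_\cN}$-sequence $(u_n)\subset\cN\cap(X^G)^{S_1}$ with $c_\cN\ge a>0$.

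The hardest step is to restore compactness. If $\lim_n\sup_{y\in\R^3}\int_{B(y,1)}|u_n|^2\,dx=0$, Lions' vanishing lemma together with (F2)--(F3) forces $\int_{\R^3}(F(x,u_n)+\langle f(x,u_n),u_n\rangle)\,dx\to 0$, so $\|u_n^+\|\to 0$ and $c_\cN=0$, contradicting $c_\cN\ge a>0$. Hence there are $\de>0$ and $y_n\in\R^3$ with $\int_{B(y_n,1)}|u_n|^2\,dx\ge\de$. The $G$-invariance of $u_n$ makes $|u_n|^2$ cylindrically symmetric about the $x_3$-axis; averaging the mass of $u_n$ over the orbit circle through $y_n$ shows that the distance from $y_n$ to the $x_3$-axis must remain bounded, since otherwise the orbit carries $L^2$-mass of order $|y_n|$, violating the $L^2$-bound on $u_n$. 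By (P) we may further shift in $x_3$ by integers $k_n\in\Z$ so that $\tu_n(x):=u_n(x+k_ne_3)$ has a bounded concentration point and a nonzero weak limit $u^\star\in(X^G)^{S_1}$; the periodicity of $V$ and $F$ guarantees $\tu_n\in\cN$, $J(\tu_n)\to c_\cN$ and $J'(\tu_n)\to 0$.

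Local compact embeddings $H^1_{\mathrm{loc}}\hookrightarrow L^p_{\mathrm{loc}}$ together with the convexity of $F$ yield $J'(u^\star)=0$, while weak lower semicontinuity of $J$ (cf.\ Remark~\ref{rem:F9}(c)) gives $J(u^\star)\le\liminf_n J(\tu_n)=c_\cN$. Since $u^\star\in\cN$ we also have $J(u^\star)\ge c_\cN$, so $u^\star$ minimizes $J$ on $\cN\cap(X^G)^{S_1}$. The principle of symmetric criticality then promotes $u^\star$ to a critical point of $J$ on the full space $X$, producing a ground state of \eqref{eq:mainR3} lying in $(X^G)^{S_1}$. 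The concentration-compactness argument of the third paragraph, which replaces the compact embedding available on bounded domains, is the only genuinely new ingredient beyond the bounded-domain analysis.
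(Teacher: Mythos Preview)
Your overall strategy matches the paper's: restrict to $(X^G)^{S_1}$, use the spectral gap of $\cL$ to split into $X^+\oplus\tX$, apply Theorem~\ref{ThLink1}(a) to get a bounded $(PS)_{c_\cN}$-sequence in $\cN$, then recover compactness by a Lions-type argument combined with $\Z$-translations in $x_3$. The paper gives only a sketch and refers to \cite{BDPR:2016} for the concentration-compactness step; your additional observation that cylindrical symmetry forces the concentration centers to stay at bounded distance from the $x_3$-axis is the right mechanism.

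There is, however, a genuine error in your final paragraph. You claim that ``weak lower semicontinuity of $J$ (cf.\ Remark~\ref{rem:F9}(c))'' gives $J(u^\star)\le\liminf_n J(\tu_n)$. This is false: Remark~\ref{rem:F9}(c) says $F$ is convex, which makes $u\mapsto\int_{\R^3}F(x,u)\,dx$ weakly \emph{lower} semicontinuous, hence $-\int F$ is weakly \emph{upper} semicontinuous. Together with the term $-\tfrac12\|\tu\|^2$, this shows $J$ is \emph{not} weakly lower semicontinuous, and your inequality cannot be obtained this way. The standard fix is to use Fatou's lemma on the nonnegative integrand $\tfrac12\langle f(x,u),u\rangle-F(x,u)\ge 0$ (take $t=0$, $v=0$ in (F9)): since $J(\tu_n)-\tfrac12 J'(\tu_n)[\tu_n]=\int_{\R^3}\bigl(\tfrac12\langle f(x,\tu_n),\tu_n\rangle-F(x,\tu_n)\bigr)\,dx\to c_\cN$ and $\tu_n\to u^\star$ a.e.\ along a subsequence, Fatou gives $\int_{\R^3}\bigl(\tfrac12\langle f(x,u^\star),u^\star\rangle-F(x,u^\star)\bigr)\,dx\le c_\cN$; since $J'(u^\star)=0$, the left side equals $J(u^\star)$. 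With this correction your argument goes through.
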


The proof is based on Theorem~\ref{ThLink1}~a). One works on the space $X$ with $p$ from (F3). It is sufficient to find critical points of $J$ constrained to $Y:=(X^G)^{S_1}$. Since $0\not \in \si(\cL)$ there is a direct sum decomposition $(X^G)^{S_1} = X^+\oplus\tX$ such that the quadratic form $Q(u) := \int_{\R^3}|\curlop u|^2 -  V(x)|u|^2\,dx$ is positive on $X^+$, negative on $\tX$, and $\|u\|=(Q(u^+)-Q(\tu))^{1/2}$ defines an equivalent norm for $u=u^++\tu\in (X^G)^{S_1}=X^+\oplus\tX$. Now observe that $J:(X^G)^{S_1}\to \R$ has the form $J(u)=\frac12\|u^+\|^2-I(u),$ as in \eqref{EqJ} with $I(u)=Q(\tu)+\int_{\R^3}F(x,u)\,dx$. One checks that $J$ is coercive on $\cN$ and satisfies  (I1)-(I4), (I6), (I8), (I9), where
\[
  \cN= \{u\in (X^G)^{S_1}\setminus\tX: J'(u)|_{\R u\oplus \tX}=0\}.
\]
Theorem~\ref{ThLink1}~a) yields a Palais-Smale sequence  $(u_n)\subset \cN$ that minimizes $J$ on $\cN$. Then, by means of a concentration-compactness argument, one shows that, after passing to a subsequence and up to $\Z$-translation in $x_3$, $u_n$ is weakly convergent to a nontrivial ground state of $J$; cf.\ \cite[Proof of Theorem~1.3]{BDPR:2016}. We leave details for the reader.

\begin{Rem}
  a) In the special case $F(x,u)=\Ga(x)|u|^p$ with $\Ga=\Ga(r,x_3)\in L^{\infty}(\R^3)$ cylindrically symmetric, periodic in $x_3$ and bounded away from $0$, the above result has been obtained in \cite[Theorem~1.3]{BDPR:2016}. For such $F$ the Nehari-Pankov manifold is of class $\cC^1$ which makes the proof somewhat easier. In \cite[Section~4]{BDPR:2016} one can also find an example for a potential $V=V(r,x_3)$ satisfying (P) with $0\not \in \si(\cL)$ and $\si(\cL)\cap(0,\infty)\ne\emptyset$.

  b) Since $(X^G)^{S_1}$ is locally compactly embedded into $L^p(\R^3,\R^3)$ it is easy to check that $J'$ is weak-to-weak$^*$ continuous in $(X^G)^{S_1}$, and one can apply as well linking results from \cite{BenciRabinowitz,Bartsch:1993,MederskiSystem} varying the hypotheses on $F$. For instance, condition (F9) can be weakened by not requiring the strict inequality, and Theorem \ref{Th1SymR^3} remains valid. Indeed, by means of \cite[Theorem~2.1]{MederskiSystem} one finds a Cerami sequence $(u_n)$ at level $0<c\leq \inf_{\cN}J$.  Again, by a concentration-compactness argument, one sees that, after passing to a subsequence and up to $\Z$-translation in $x_3$, $u_n$ is weakly convergent to a nontrivial critical point $u_0$ of $J$. It follows that $u_0\in\cN$ is a ground state. Detailed arguments can be provided as in \cite{MederskiSystem}. The ground state level may also be characterized in terms via an infinite-dimensional min-max scheme in $(X^G)^{S_1}$ as in \cite{MederskiSystem}[(1.5)].

  c) The assumption $0\notin \si(\cL)$ excludes the case $V=0$ treated in \cite{BenForAzzAprile}.
\end{Rem}

Under additional assumptions on $V$ and $F$ more can be said about the symmetry of the ground state. The following result is a special case of \cite[Theorem~1]{HirschReichel}.

\begin{Th}\label{thm:symR3}
  Assume that $V=V(r,x_3)$, $F=\Ga(r,x_3)|u|^p$ with $2<p<6$, so that $V,\Ga\in L^\infty(\R^3)$ and $\inf V,\inf \Ga>0$. Suppose moreover that $V$ and $\Ga$ are Steiner symmetric in $x_3$. Then the ground state solution in $(X^G)^{S_1}$ of \eqref{eq:mainR3} from Theorem~\ref{Th1SymR^3} is symmetric about the plane $\{x_3=0\}$.
\end{Th}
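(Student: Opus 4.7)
The plan is to Steiner-symmetrize the ground state in the variable $x_3$, show this does not increase $J$, then use the Nehari-Pankov characterization of $c_\cN$ restricted to the $\sigma$-invariant subspace $\cY$ to force equality, and finally extract the pointwise symmetry via the Brothers-Ziemer principle. Let $u\in\cN\cap(X^G)^{S_1}$ be a ground state given by Theorem~\ref{Th1SymR^3}, written $u(x)=\alpha(r,x_3)(-x_2,x_1,0)^T$. A direct cylindrical computation gives $|\curlop u|^2=r^2(\pa_3\alpha)^2+(2\alpha+r\pa_r\alpha)^2$ and the pointwise inequality $|\curlop(|\alpha|(-x_2,x_1,0)^T)|\leq|\curlop u|$; together with the invariance of $\int V|u|^2$ and $\int\Ga|u|^p$ under $\alpha\mapsto|\alpha|$, this allows me to assume $\alpha\geq 0$. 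For each $r>0$ let $\alpha^*(r,\cdot)$ be the one-dimensional symmetric-decreasing rearrangement of $\alpha(r,\cdot)$ on $\R$ and set $u^*(x):=\alpha^*(r,x_3)(-x_2,x_1,0)^T\in(X^G)^{S_1}$, still divergence-free with $|u^*|$ equal to the Steiner symmetrization of $|u|$ in $x_3$.

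Using $\int|\curlop w|^2=\int|\nabla w|^2$ for divergence-free $w\in\{u,u^*\}$ together with the cylindrical identity $|\nabla w|^2=|\nabla|w||^2+|w|^2/r^2$, the weight $\int|w|^2/r^2\,dx$ is invariant under rearrangement in $x_3$ (the symmetrization is pointwise in $r$ and preserves the $L^2(\R)$ norm of $|w|(r,\cdot)$), while the classical one-dimensional P\'olya-Szeg\H{o} inequality applied fibrewise in $r$ gives $\int|\nabla|u^*||^2\leq\int|\nabla|u||^2$. Combined with the Hardy-Littlewood inequality applied slicewise in $x_3$ to the Steiner-symmetric $V$ and $\Ga$, one obtains $\int V|u|^2\leq\int V|u^*|^2$ and $\int\Ga|u|^p\leq\int\Ga|u^*|^p$. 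Hence $J(u^*)\leq J(u)=c_\cN$.

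The main obstacle is the reverse inequality $J(u^*)\geq c_\cN$. Let $\sigma(x_1,x_2,x_3):=(x_1,x_2,-x_3)$; since $V$ and $\Ga$ are $\sigma$-invariant, the functional $J$ and the operator $\cL=\curlop\curlop-V$ commute with $\sigma$, hence so does the spectral splitting $X=X^+\oplus\tX$. Put $\cY:=(X^G)^{S_1}\cap\mathrm{Fix}(\sigma)$, so that $u^*\in\cY$. For $u\in\cY$ the $\sigma$-equivariance of $J'$ forces $J'(u)$ to vanish on the antisymmetric part of $\tX$, so that $\cN\cap\cY$ coincides with the Nehari-Pankov manifold of $J|_\cY$. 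Running the construction of Theorem~\ref{Th1SymR^3} in $\cY$ gives $c_{\cN,\cY}:=\inf_{\cN\cap\cY}J\geq c_\cN$; applying the symmetrization step to an infimizing sequence $(u_n)\subset\cN\cap(X^G)^{S_1}$ and then projecting each $u_n^*$ onto $\cN\cap\cY$ via the Nehari-Pankov map $n_\cY$ of $\cY$ (using Proposition~\ref{prop:nehari} and the convexity of $I$ to bound the projection's $J$-value above by the relevant half-space maximum, itself bounded by $J(u_n)$) yields $c_{\cN,\cY}\leq c_\cN$. Thus $J(u^*)=c_\cN=J(u)$ and equality holds throughout the rearrangement inequalities.

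Finally, the Brothers-Ziemer equality principle applied to $|u|$ forces $\alpha(r,\cdot)$ to be symmetric-decreasing about some plane $\{x_3=c(r)\}$ for a.e.\ $r>0$; the equality case of the Hardy-Littlewood inequality applied to the non-constant Steiner-symmetric $V$ (or $\Ga$) in $x_3$ then forces $c(r)\equiv 0$. Therefore $\alpha(r,x_3)=\alpha(r,-x_3)$ and $u$ is symmetric about the plane $\{x_3=0\}$, as claimed.
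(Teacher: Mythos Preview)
The paper does not actually prove this theorem; it simply records it as a special case of \cite[Theorem~1]{HirschReichel}. So there is no argument in the paper to compare yours against, and your attempt must stand on its own.

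The symmetrization half of your argument is essentially correct. The pointwise identity $|\nabla w|^2=|\nabla|w||^2+|w|^2/r^2$ does hold for $w=\alpha(r,x_3)(-x_2,x_1,0)^T$ with $\alpha\ge 0$, and combining $\int|\curlop w|^2=\int|\nabla w|^2$ with the fibrewise P\'olya--Szeg\H{o} and Hardy--Littlewood inequalities gives $J(u^*)\le J(u)$.

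The genuine gap is in your ``reverse inequality'' step. You assert that projecting $u_n^*$ onto $\cN\cap\cY$ via the Nehari--Pankov map $n_\cY$ yields a point whose $J$-value is bounded above by $J(u_n)$. But by Proposition~\ref{prop:nehari} the Nehari--Pankov projection \emph{maximizes} $J$ over the half-space $\R^+(u_n^*)^+ +\tX_\cY$, so one only gets $J\bigl(n_\cY((u_n^*)^+)\bigr)\ge J(u_n^*)$; there is no evident reason this half-space maximum should be $\le J(u_n)$. The difficulty is structural: Steiner symmetrization in $x_3$ is nonlinear and does not respect the spectral splitting $X^+\oplus\tX$ of $\cL=\curlop\curlop-V$, so the half-space through $(u_n^*)^+$ is \emph{not} obtained by symmetrizing the half-space through $u_n^+$, and the inequality $J(u_n^*)\le J(u_n)$ cannot be transported to the half-space maxima. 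Your argument would go through if $\tX\cap\cY=\{0\}$ (then $n_\cY(u_n^*)=t_n u_n^*$ and $\max_{t\ge 0} J(tu_n^*)\le\max_{t\ge 0} J(tu_n)\le J(u_n)$ follows from $(tu)^*=tu^*$), but with $\inf V>0$ the negative spectral subspace of $\cL$ on $(X^G)^{S_1}$ is typically nontrivial, and restricting to $x_3$-even fields does not kill it.

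Two secondary issues: the Brothers--Ziemer equality principle requires the non-degeneracy condition $|\{\nabla|u|=0\}\cap\{0<|u|<\esssup|u|\}|=0$, which you have not checked; and your use of the Hardy--Littlewood equality case to force $c(r)\equiv 0$ needs $V$ or $\Ga$ to be \emph{strictly} decreasing in $|x_3|$, whereas ``Steiner symmetric'' alone allows constants --- and in fact Steiner symmetry together with the periodicity hypothesis (P) inherited from Theorem~\ref{Th1SymR^3} forces $V$ and $\Ga$ to be constant in $x_3$, in which case your pinning argument collapses entirely and only symmetry about \emph{some} plane $\{x_3=c\}$ can be expected.
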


At the end of this subsection we mention a result from \cite{BDPR:2016}[Theorem 1.2] for a defocusing nonlinearity.

\begin{Th} Let $F(x,u)=\Ga(x) |u|^p$ with $p>2$ and assume that $V=V(r,x_3)$ and $\Ga=\Ga(r,x_3)$ have cylindrical symmetry. Suppose moreover:
	\begin{itemize}
		\item[(i)] $\Ga(x) \leq -C(1+|x|)^\al$ in $\R^3$ with $\al > \frac{3}{2}p$ and $C>0$,
		\item[(ii)] $V\in L^\infty(\R^3)$ and  $\esssup_{\R^3} V<0$.
	\end{itemize}
	Then \eqref{eq:mainR3} has a solution that is a global minimizer of $J$ in $(X^G)^{S_1}$.
\end{Th}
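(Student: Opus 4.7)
The plan is to apply the direct method of the calculus of variations. Under the defocusing sign hypotheses (i)--(ii), the functional is a sum of three nonnegative, convex pieces on $(X^G)^{S_1}$:
\[
  J(u) = \frac12|\curl u|_2^2 + \frac12\int_{\R^3}|V(x)||u|^2\,dx + \int_{\R^3}|\Gamma(x)||u|^p\,dx \ge 0.
\]
In particular $J\ge 0$, $J(0)=0$, and $c_0 := \inf_{(X^G)^{S_1}}J\in[0,\infty)$. On the set where $J$ is finite, the natural norm
\[
  \|u\|_\star^2 := |\curl u|_2^2 + \int_{\R^3}|V||u|^2\,dx + \Big(\int_{\R^3}(1+|x|)^\alpha|u|^p\,dx\Big)^{2/p}
\]
is controlled by $J$ thanks to (i), so $J$ is coercive in $\|\cdot\|_\star$ on $(X^G)^{S_1}$.

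I would then pick a minimizing sequence $(u_n)\subset(X^G)^{S_1}$, $J(u_n)\to c_0$; coercivity gives $\sup_n\|u_n\|_\star<\infty$. After passing to a subsequence, $u_n\weakto u$ weakly in $H^1(\R^3,\R^3)$---recalling that $(X^G)^{S_1}\subset H^1(\R^3,\R^3)$, as noted just before Theorem~\ref{Th1SymR^3}---and weakly in the weighted Lebesgue space $L^p((1+|x|)^\alpha\,dx)$. Rellich--Kondrachov on bounded balls together with a standard diagonal extraction then yields strong convergence in $L^q_{\mathrm{loc}}(\R^3,\R^3)$ for every $2\le q<6$, and, along a further subsequence, $u_n\to u$ pointwise a.e.\ on $\R^3$.

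With these convergences secured, weak lower semicontinuity of $J$ is immediate. The two quadratic contributions are weakly lower semicontinuous by convexity and continuity, while for the nonlinear term Fatou's lemma applied to the nonnegative integrands delivers
\[
  \int|\Gamma||u|^p\,dx \le \liminf_{n\to\infty}\int|\Gamma||u_n|^p\,dx.
\]
Summing, $J(u)\le c_0$, so $u$ realises the minimum. Since $(X^G)^{S_1}$ is a closed linear subspace and $J\in\cC^1$, the minimiser satisfies $J'(u)|_{(X^G)^{S_1}}=0$, and the Palais principle of symmetric criticality---applicable because (S) together with the isometry property of $S_1$ makes $J$ invariant under the compact group $G\times\langle S_1\rangle$---promotes this to $J'(u)=0$ in $X^*$, so $u$ weakly solves \eqref{eq:mainR3}.

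The main obstacle I would expect is the weighted compactness step: in $\R^3$, weak convergence in $H^1$ alone admits both concentration and vanishing, and the tails of $u_n$ in the weighted $L^p$-norm must be controlled. The exponent restriction $\alpha>\tfrac{3}{2}p$ is precisely what dovetails the Sobolev embedding $H^1\hookrightarrow L^6$ with the weight growth so that this tail control holds and Fatou can be applied. Once the weighted compactness is in hand, the remainder of the argument is soft and, in contrast to the focusing case, requires none of the linking, Nehari--Pankov, or min-max machinery developed in Section~\ref{sec:CriticalTheory}.
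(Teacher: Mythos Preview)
Your opening display already decides the matter. With $\esssup V<0$ and $\Ga<0$ as in (i)--(ii), all three summands in
\[
  J(u)=\tfrac12|\curlop u|_2^2+\tfrac12\int_{\R^3}|V|\,|u|^2\,dx+\int_{\R^3}|\Ga|\,|u|^p\,dx
\]
are nonnegative, so $J\ge0=J(0)$. Hence $c_0=0$ and it is attained at $u=0\in(X^G)^{S_1}$, which trivially solves \eqref{eq:mainR3}. Everything you write afterwards---coercivity in $\|\cdot\|_\star$, weak limits, Fatou, symmetric criticality---is correct but superfluous; in particular the hypothesis $\al>\tfrac32p$ is never invoked. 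Your closing explanation of its role (``dovetails $H^1\hookrightarrow L^6$ with the weight growth for tail control'') cannot be right either: the weight $(1+|x|)^\al$ \emph{grows}, so a bound on $\int(1+|x|)^\al|u|^p$ kills tails for any $\al>0$.

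That the argument collapses to $u=0$ while (i) sits idle is a signal you should have caught: the survey has a sign slip in transcribing \cite[Theorem~1.2]{BDPR:2016}. There the equation is written in the form \eqref{eq:main-rad}, i.e.\ with $+V(x)u$ rather than $-V(x)u$, so the potential enters $J$ with the opposite sign. With the intended sign the middle term is $-\tfrac12\int|V||u|^2$, the functional is no longer nonnegative, $\inf J<0$, and the minimisation is genuine. Then (i) does real work: Young's inequality gives
\[
  |V(x)|\,|u|^2\le \eps(1+|x|)^\al|u|^p+C_\eps(1+|x|)^{-2\al/(p-2)},
\]
and integrability of the remainder over $\R^3$---needed for a lower bound on $J$ and hence coercivity---forces $\al>\tfrac32(p-2)$, with the sharper $\al>\tfrac32p$ entering in the compactness step. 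Your overall scheme (direct minimisation on $(X^G)^{S_1}$) is exactly what the paper's one-line ``minimization argument'' means, but because you computed with the wrong sign, the substantive parts---why $J$ is bounded below, why the minimiser is nontrivial, and where $\al$ enters---are absent from your proposal.
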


The proof is a minimization argument in $(X^G)^{S^1}$.

\subsection{The general case}

In this subsection we do not require any symmetry assumptions and we allow that $V=0$. Therefore even if (S) holds the results will be different from those of the last subsection. We require the following conditions on $V:\R^3\to\R$ and $F:\R^3\times\R^3\to\R$:
\begin{itemize}
  \item[(V)] $V\in L^{\frac{p}{p-2}}(\R^3)\cap L^{\frac{q}{q-2}}(\R^3)$, $V(x)\geq 0$ for a.e.\ $x\in\R^3$ and $|V|_{\frac{3}{2}}<S$, where $S$ is the classical best Sobolev constant of the embedding of $\cD^{1,2}$ into $L^6(\R^3)$.
  \item[(F13)] $F$ is $\Z^3$-periodic in $x$, i.e. $F(x,u)=F(x+y,u)$ for $x,u\in \R^3$ and $y\in\Z^3$.
  \item[(F14)] There are $2<p<6<q$ and constants $c_1,c_2>0$ such that
	  \[ F(x,u)\geq c_1 \min(|u|^{p},|u|^{q}) \]
	  and
	  \[ |f(x,u)|\leq c_2 \min(|u|^{p-1},|u|^{q-1})\]
	  for all $x,u\in\R^3$.
  \item[(F15)] If $V=0$ a.e.\ on $\R^3$ then $F$ is uniformly strictly convex with respect to $u\in\R^3$, i.e.\ for any compact $A\subset(\R^3\times\R^3)\setminus\{(u,u):\;u\in\R^3\}$:
	  \[
        \inf_{\genfrac{}{}{0pt}{}{x\in\R^3}{(u_1,u_2)\in A}}
	      \left(\frac12\big(F(x,u_1)+F(x,u_2)\big)-F\left(x,\frac{u_1+u_2}{2}\right)\right) > 0.
	  \]
\end{itemize}
Clearly (F14) implies (F2) and (F12). Model nonlinearities are
\begin{equation}\label{Ex1}
F(x,u)=\left\{
\begin{array}{ll}
\Ga(x)\big(\frac1p|Mu|^p+\frac1q-\frac1p\big)
&
\hbox{if } |Mu|>1,\\
\Ga(x) \frac1q|Mu|^q
&
\hbox{if } |Mu|\leq 1,\\
\end{array}
\right.
\end{equation}
and
\begin{equation}\label{Ex2}
  F(x,u)=\Ga(x)\frac1p\big((1+|Mu|^q)^{\frac{p}{q}}-1\big)
\end{equation}
where $\Ga\in L^{\infty}(\R^3)$ is $\Z^3$-periodic and $\inf\Ga>0$, $M\in GL(3)$ is an invertible $3\times 3$ matrix. Then the assumptions (F1), (F9), (F13)-(F15) are satisfied. Observe that these functions are not radial when $M$ is not an orthogonal matrix.

We need the space $L^{p,q}:=L^p(\R^3,\R^3)+L^q(\R^3,\R^3)$ with the norm
\[
  |u|_{p,q} = \sup\left\{\frac{\int_{\R^3}\langle u, v\rangle\, dx}{|v|_{\frac{p}{p-1}}+|v|_{\frac{q}{q-1}}}:
                  v\in L^{\frac{p}{p-1}}(\R^3,\R^3)\cap L^{\frac{q}{q-1}}(\R^3,\R^3),\ v\neq 0\right\}.
\]
It coincides with the usual Lebesgue space $L^p(\R^3,\R^3)$ if $p=q$; see \cite{BadialePisaniRolando} for more properties of $L^{p,q}$. The functional $J$ is defined and of class $\cC^1$ on the completion $X=\D(\curl,p,q)$ of $\cC_0^{\infty}(\R^3,\R^3)$ with respect to the norm
where
\[
  \|u\|_{\curl,p,q}:=(|\curlop u|^2_2+|u|_{p,q}^2)^{1/2}.
\]
Any $u\in \cD(\curl,p,q)$ has the Helmholtz decomposition $u=v+\nabla w$ with $\div(v)=0$ and $\nabla w\in L^{p,q}$; see \cite[Lemma 3.2]{MederskiENZ}. Hence
\[
\begin{aligned}
J(v+\nabla w)
 &= \frac12\int_{\R^3}|\curlop v|^2\,dx - \frac12\int_{\R^3} V(x)|v+\nabla w|^2\,dx
     - \int_{\R^3} F(x,v+\nabla w)\,dx\\
 &= \frac12\int_{\R^3}|\nabla v|^2\,dx - \frac12\int_{\R^3} V(x)|v+\nabla w|^2\,dx - \int_{\R^3} F(x,v+\nabla w)\,dx
\end{aligned}
\]
and the Nehari-Pankov manifold is given by
\begin{equation}\label{DefOfNehari1}
	\cN := \{u\in \D(\curl,p,q)\setminus \W: J'(u)|_{\R u\oplus \W}=0\},
\end{equation}
where $\W$ is the closure of $\{\nabla\vp: \vp\in \cC_0^{\infty}(\R^3)\}$ in $\D(\curl,p,q)$. We would like to mention that $J$ with the above nonlinearities has the linking geometry; see \cite[Proposition 2.1]{MederskiSurv}.

The following result is due to \cite[Theorem~2.1]{MederskiENZ}.

\begin{Th}\label{ThMainNonsym}
  Assume that (V) and (F1), (F9), (F13)-(F15) hold. Then there is a solution to \eqref{eq:mainR3}. If $V<0$ a.e.\ on $\R^3$ or $V=0$ then \eqref{eq:mainR3} has a ground state solution, i.e.\ there is a critical point $u\in\cN$ which is a minimizer of $J$ restricted to $\cN$.
\end{Th}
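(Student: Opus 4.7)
\begin{altproof}{Theorem~\ref{ThMainNonsym}}
The plan is to apply Theorem~\ref{ThLink1} to $J$ from \eqref{eq:actionR3} on the Banach space $X = \D(\curl,p,q)$ using the Helmholtz splitting $X = \V\oplus\W$ introduced after \eqref{DefOfNehari1}; here $\V$ consists of divergence-free fields (continuously embedded into $\cD^{1,2}(\R^3,\R^3)\hookrightarrow L^6(\R^3,\R^3)$) and $\W$ is the closure of $\{\nabla\vp:\vp\in\cC_0^{\infty}(\R^3)\}$. Take $X^+ := \V$ equipped with the Hilbert norm
\[
\|v\|^2 := \int_{\R^3}|\nabla v|^2\,dx - \int_{\R^3}V(x)|v|^2\,dx,
\]
equivalent to $|\nabla v|_2^2$ thanks to $|V|_{3/2} < S$, and $\wt X := \W$. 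Then $J(u) = \tfrac12\|v\|^2 - I(u)$ for $u = v+\nabla w \in \V\oplus\W$, with
\[
I(u) := \tfrac12\int_{\R^3}V(x)|u|^2\,dx + \int_{\R^3}F(x,u)\,dx.
\]
By (V), $I$ is nonnegative and convex; by (F9) and Remark~\ref{rem:F9}(c), $F$ is strictly convex, hence so is $I$, and under the ground-state hypotheses stated in the theorem ($V$ strictly signed a.e., or $V\equiv 0$ combined with (F15)) this strict convexity becomes uniform on bounded sets.

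Verifying the abstract hypotheses of Section~\ref{sec:CriticalTheory} is routine: (I1), (I2), (I4) follow from the nonnegativity and convex $\cT$-lower semicontinuity of $I$ combined with the lower bound $F(x,u)\ge c_1\min(|u|^p,|u|^q)$ from (F14); (I3) from the same lower semicontinuity and the reflexivity of $L^{p,q}$; (I5) from strict convexity of $I$; the mountain-pass geometry (I6) and the superquadratic behaviour (I8) from the two-sided growth in (F14), together with the embedding $\V\hookrightarrow L^p(\R^3,\R^3)$ for $2<p<6$ and the smallness $|V|_{3/2}<S$; and (I9) is exactly (F9). Theorem~\ref{ThLink1}(a) then produces a Palais-Smale sequence $(u_n)\subset\cN$ at the level $c_\cN := \inf_\cN J \ge a > 0$, bounded in $X$ because $J$ is coercive on $\cN$, a standard consequence of (F14) via the Ambrosetti-Rabinowitz-type manipulation on $\cN$.

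The main obstacle is to extract a nontrivial limit: $\R^3$ is non-compact, $J'$ is \emph{not} $\cT$-weak-to-weak$^*$ continuous because $\curlop$ has an infinite-dimensional kernel, and $V$ is not assumed periodic. This is where the $\Z^3$-periodicity (F13) of $F$ and the integrability $V\in L^{3/2}(\R^3)$ enter. A Lions-type vanishing lemma in $L^{p,q}$ (cf.\ \cite{MederskiENZ,BadialePisaniRolando}) combined with (F14) rules out that $u_n\to 0$ locally in $L^{p,q}$ after every $\Z^3$-translation, for then $\int F(x,u_n)\,dx\to 0$ and $J(u_n)\to 0$, contradicting $c_\cN > 0$. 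Hence, along a subsequence and after a possible translation $y_n\in\Z^3$, the shifted sequence $\wt u_n := u_n(\cdot - y_n)$ satisfies $\wt u_n \weakto u_0 \ne 0$. A Benci-Cerami-type comparison between the minimax level of $J$ and that of the translation-invariant limit problem obtained as the non-periodic $V$ fades at infinity then ensures $u_0$ remains a critical point of the original $J$.

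The identity $J'(u_0) = 0$ is obtained by testing $J'(\wt u_n)\to 0$ against every compactly supported $\phi\in X$: the divergence-free part converges strongly in $L^p_{\text{loc}}(\R^3,\R^3)$ by Rellich-Kondrachov on $\cD^{1,2}$, the gradient part converges weakly in $\W$, and the convexity of $I$ (via Brezis-Lieb/monotone arguments) lets one pass to the limit in the $I'$-term, bypassing the absence of weak-to-weak$^*$ continuity of $J'$. This proves the first assertion. For the ground-state statement the uniform strict convexity of $I$ forces $I(\wt u_n)\to I(u_0)$, hence by (I3) $\wt u_n\cTto u_0$, so $u_0\in\cN$ and $J(u_0)=c_\cN$, showing that $u_0$ is a minimizer of $J$ on the Nehari-Pankov manifold. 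The crux of the entire argument is this interplay between convex analysis (to circumvent the failure of weak continuity of $J'$) and $\Z^3$-periodic translation (to restore compactness on $\R^3$ despite the non-periodic perturbation $V$).
\end{altproof}
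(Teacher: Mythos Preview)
Your overall strategy---verify the abstract hypotheses of Theorem~\ref{ThLink1} on $X=\D(\curl,p,q)=\V\oplus\W$, produce a bounded $(PS)_{c_\cN}$-sequence in $\cN$, and then recover a nontrivial limit by concentration--compactness---is the architecture the paper follows. There is, however, a computational slip in the set-up and a genuine gap in the compactness step.

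The slip: with $\|v\|^2=\int_{\R^3}|\nabla v|^2-\int_{\R^3} V|v|^2$ and $I(u)=\tfrac12\int_{\R^3} V|u|^2+\int_{\R^3} F$ one does \emph{not} obtain $J(u)=\tfrac12\|v\|^2-I(u)$; expanding produces a spurious term $-\tfrac12\int_{\R^3} V|v|^2$. The correct Hilbert norm on $X^+=\V$ is simply $\|v\|^2=\int_{\R^3}|\nabla v|^2$, after which $I$ coincides with \eqref{DefOfI} and the verification of (I1)--(I9) goes through as you sketch.

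The substantive gap is in your third and fourth paragraphs. After excluding vanishing you translate $(u_n)$ by $y_n\in\Z^3$ and pass to a weak limit $u_0\ne0$. But $V$ is \emph{not} periodic (only $F$ is, via (F13)), so the translated sequence is a $(PS)$-sequence for the translated functional; if $|y_n|\to\infty$ its weak limit is a critical point of $J_0$ (the problem with $V=0$), not of $J$. The sentence invoking a ``Benci--Cerami-type comparison'' to conclude that ``$u_0$ remains a critical point of the original $J$'' supplies no mechanism: an energy comparison can rule out escape to infinity, but it cannot turn a critical point of $J_0$ into one of $J$. Likewise, the claim that uniform strict convexity of $I$ ``forces $I(\wt u_n)\to I(u_0)$'' is unsupported---uniform convexity draws conclusions \emph{from} convergence of values, not \emph{to} it.

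The paper's route differs at precisely this point. One takes the weak limit $\bar u_0$ of $(u_n)$ \emph{without} translating and appeals to the profile decomposition of Theorem~\ref{ThMainSplitting}. That result delivers (i) weak-to-weak$^*$ continuity of $J'$ \emph{restricted to} $\cN$ (recorded immediately after Theorem~\ref{ThMainSplitting}), so $\bar u_0$ is automatically a critical point of $J$; and (ii), via the subsequent finite energy splitting, the identity $c_\cN=J(\bar u_0)+\sum_{i\ge1}J_0(\bar u_i)$ with $\bar u_i\in\cN_0$. The level comparison you allude to then enters only to force $\bar u_0\ne0$ and, under the additional hypothesis on $V$, to conclude $J(\bar u_0)=c_\cN$. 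What your sketch is missing is exactly Theorem~\ref{ThMainSplitting}: it is the device that restores weak continuity of $J'$ on $\cN$ and organises the loss of compactness into profiles on $\cN_0$, after which the energy comparison does its job.
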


\begin{Rem}
  In the cylindrically symmetric setting the result holds true with $X=\D(\curl,p,q)$ replaced by $(X^G)^{S_2}$, where $S_2=-S_1$, that is one obtains a ground state solution as minimizer of $J$ on $\cN\cap(X^G)^{S_2}$. If in addition $F$ is even in $u$ the result holds true with $X$ replaced by $(X^G)^{S_1}$. For $V=0$ and $F$ independent of $x$ one therefore recovers the main results from \cite{BenForAzzAprile} and \cite{DAprileSiciliano}.
\end{Rem}

A crucial role in the proof of Theorem \ref{ThMainNonsym} is a careful analysis of bounded sequences in $\cN$ which we recall now. We need the functional
\begin{equation}\label{DefOfI}
  I(u):= \frac12\int_{\R^3} V(x)|u|^2\,dx + \int_{\R^3} F(x,u)\,dx.
\end{equation}

\begin{Th}\label{ThMainSplitting}
  Assume that (V) and (F1), (F9), (F13)-(F15) hold. If $(u_n)_{n=0}^{\infty}\subset \cN$ is bounded then, up to a subsequence, there is $N\in \N\cup \{\infty\}$, $\bar{u}_0\in \D(\curl,p,q)$ and there are sequences $(\bar{u}_i)_{i=1}^N\subset \cN_0$ and $(x_n^i)_{n\geq i}\subset \Z^3$ with $x_n^0=0$ such that the following conditions hold:
  \begin{equation}\label{EqThMainSplitting1}
	u_n(\cdot+x_n^i)\rightharpoonup \bar{u}_i\hbox{ in }\D(\curl,p,q)\hbox{ and }
    u_n(\cdot+x_n^i)\to \bar{u}_i\hbox{ a.e. in }\R^3\hbox{ as }n\to\infty,
  \end{equation}
  for any $0\leq i < N+1$, and
  \begin{equation}\label{EqThMainSplitting2}
	u_n-\sum_{i=0}^{\min\{n,N\}}\bar{u}_i(\cdot-x_n^i)\to 0\hbox{ in }L^{p,q}=L^p(\R^3,\R^3)+L^q(\R^3,\R^3)
    \hbox{ as }n\to\infty.
  \end{equation}
  Moreover
  \begin{equation}\label{eqInfinitesplitting}
  \lim_{n\to\infty}I(u_n)=I(\bar{u}_0)+\sum_{i=1}^N I_0(\bar{u}_i)<\infty,
  \end{equation}
  where $\cN_0$ and $I_0$ are given by \eqref{DefOfNehari1} and \eqref{DefOfI} under assumption $V=0$.
\end{Th}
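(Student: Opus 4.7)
The plan is to run an iterative concentration-compactness/profile decomposition argument for the bounded sequence $(u_n)\subset\cN$, producing the profiles $\bar u_i$ and lattice translations $x_n^i\in\Z^3$ one at a time, and then using the $\Z^3$-periodicity of $F$ together with the decay-at-infinity of $V$ encoded in (V) to transfer the Nehari-Pankov constraint to each nontrivial limit profile $\bar u_i$ for $i\ge 1$.

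First I would extract $\bar u_0$ as a weak limit of $(u_n)$ in $\D(\curl,p,q)$ (a.e.\ along a subsequence), set $x_n^0=0$, and form the residual $r_n^{(1)}:=u_n-\bar u_0\rightharpoonup 0$. The inductive step is: if $|r_n^{(i)}|_{p,q}\to 0$ terminate with $N=i-1$; otherwise a Lions-type non-vanishing lemma adapted to the sum space $L^{p,q}$ (as used in the framework of \cite{MederskiENZ}) furnishes $R,\delta>0$ and centers $y_n\in\R^3$ with a uniform lower bound $\int_{B_R(y_n)}(|r_n^{(i)}|^p+|r_n^{(i)}|^q)\,dx\ge\delta$ along a subsequence. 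By (F13) one replaces $y_n$ by lattice points $x_n^i\in\Z^3$ with $|y_n-x_n^i|$ bounded. Since $r_n^{(i)}\rightharpoonup 0$, the centers must escape: $|x_n^i|\to\infty$. Along a further subsequence one obtains $r_n^{(i)}(\cdot+x_n^i)\rightharpoonup\bar u_i\neq 0$ in $\D(\curl,p,q)$ and a.e.; set $r_n^{(i+1)}:=u_n-\bar u_0-\sum_{j=1}^{i}\bar u_j(\cdot-x_n^j)$ and iterate.

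To verify $\bar u_i\in\cN_0$ for $i\ge 1$ I would exploit the Nehari conditions $J'(u_n)[u_n]=0$ and $J'(u_n)[\nabla\vp]=0$ satisfied by $u_n\in\cN$. Translating by $x_n^i$ and using (F13), the $F$-pieces converge to the corresponding quantities for $\bar u_i$ via a Brezis-Lieb type argument based on a.e.\ convergence and the growth bound (F14). Crucially, because $V\in L^{p/(p-2)}\cap L^{q/(q-2)}$ has integrable tails and $|x_n^i|\to\infty$, the translated potential $V(\cdot-x_n^i)$ vanishes in the relevant norms on bounded sets, so that integrals like $\int V(x)|r_n^{(i)}(x+x_n^i)|^2\,dx=\int V(y-x_n^i)|r_n^{(i)}(y)|^2\,dy$ tend to zero after a standard tightness argument. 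This yields $J_0'(\bar u_i)[\bar u_i]=0$ and $J_0'(\bar u_i)|_{\W}=0$, while the concentration of mass guarantees $\bar u_i\notin\W$, so $\bar u_i\in\cN_0$. An analogous iterative Brezis-Lieb decomposition of $\int F(x,u_n)\,dx$, combined with the same translation-vanishing of $V$-contributions, produces the energy identity \eqref{eqInfinitesplitting} and the $L^{p,q}$-remainder statement \eqref{EqThMainSplitting2}. A uniform lower bound $I_0(u)\ge\kappa>0$ for $u\in\cN_0$, derived from (F14) and the variational characterization of $\cN_0$, then forces $\sum I_0(\bar u_i)<\infty$, so the process either terminates at finite $N$ or yields a summable countable family of profiles, in either case consistent with the statement.

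The main obstacle is the transfer of the Nehari-Pankov condition to the limit profiles, since this constraint mixes the direction $\R u$ with the infinite-dimensional curl-free subspace $\W$ and is not automatically preserved under weak limits. Two technical ingredients must be set up carefully: a Lions-type non-vanishing lemma and a Brezis-Lieb lemma in the sum space $L^{p,q}$, needed because $\D(\curl,p,q)$ does not embed into any single Lebesgue space; and the translation-vanishing of the inhomogeneous potential $V(\cdot-x_n^i)$, which uses the integrability hypothesis (V) in place of periodicity of $V$. Once these are in place, the rest of the argument follows the now-standard pattern of Struwe-type profile decompositions.
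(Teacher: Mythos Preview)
The paper does not give its own proof of this theorem; it simply refers to \cite[Theorem~2.2]{MederskiENZ}. Your outline follows the same profile-decomposition strategy used there: extract weak limits after $\Z^3$-translations via a Lions-type lemma in $L^{p,q}$, use $\Z^3$-periodicity of $F$ and the integrability of $V$ in (V) to make the translated potential vanish, and obtain the energy splitting by an iterated Brezis--Lieb argument. That part of your plan is sound and matches the cited reference.

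There is, however, a genuine gap in your justification of $\bar u_i\in\cN_0$ for $i\ge1$. The constraint $J'(u_n)|_\cW=0$ does transfer: testing with a fixed $\nabla\varphi$, translating, and using local compactness plus (F13) and the decay of $V(\cdot+x_n^i)$ gives $J_0'(\bar u_i)|_\cW=0$, i.e.\ $\bar u_i\in\cM_0$. But your passage from the single scalar equation $J'(u_n)[u_n]=0$ to $J_0'(\bar u_i)[\bar u_i]=0$ for each $i$ does not work as written. Translating this equation by $x_n^i$ leaves the curl-energy $\int|\nabla\times u_n|^2$ and the nonlinear term $\int\langle f(x,u_n),u_n\rangle$ unchanged as global quantities; neither is weakly continuous, and a Brezis--Lieb argument yields only an \emph{additive} splitting of these totals over all profiles, not the vanishing of each profile's defect separately. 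From one scalar identity you cannot extract $N+1$ independent scalar identities without further structure.

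In \cite{MederskiENZ} this step relies on more than what you invoke: one works through the Helmholtz decomposition $u_n=v_n+w_n$, exploits that (F9)/(I9) makes $u_n$ the unique maximizer of $J$ on $\R^+u_n^++\cW$ (Proposition~\ref{prop:nehari}), and uses this variational characterization of $\cN$ together with the minimizing property of $w_n$ on $v_n+\cW$ to pass the constraint to each translated limit. You should either reproduce that argument or give an alternative mechanism; the bare ``translate and Brezis--Lieb'' step does not suffice for the scalar direction of the Nehari--Pankov constraint.
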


The proof can be found in \cite[Theorem 2.2]{MederskiENZ}.  As a consequence of Theorem \ref{ThMainSplitting} we obtain the weak-to-weak$^*$ continuity of $J'$ on $\cN$; see \cite[Corollary 5.3]{MederskiENZ}. Moreover, in the spirit of the global compactness result of Struwe \cite{StruweSplitting} or Coti Zelati and Rabinowitz \cite{CotiZelatiRab}, we obtain a finite splitting of energy levels with respect to a Palais-Smale sequence in $\cN$.

\begin{Th} Assume that (V) and (F1), (F9), (F13)-(F15) hold.
	If $(u_n)_{n=0}^{\infty}\subset \cN$ is a $(PS)_c$-sequence at level $c>0$, i.e. $J(E_n)\to c$ and $J'(E_n)\to 0$, then, up to a subsequence,
	there is $\bar{u}_0\in \D(\curl,p,q)$ and a finite sequence $(\bar{u}_i)_{i=1}^N\subset \cN_0$ of critical points of $\J_0$ such that \eqref{EqThMainSplitting1}, \eqref{EqThMainSplitting2}  hold and
	\begin{equation*}
	c=J(\bar{u}_0)+
	\sum_{i=1}^N J_0(\bar{u}_i),
	\end{equation*}
	where $J_0$ is the energy functional given by \eqref{eq:actionR3} under assumption $V=0$.
\end{Th}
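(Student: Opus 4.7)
The strategy is to reduce this statement to the bounded-sequence splitting of Theorem~\ref{ThMainSplitting}; the extra ingredients needed are boundedness of any $(PS)_c$-sequence on $\cN$, the identification of the weak limits as critical points of the appropriate functional, and the finiteness of $N$. For boundedness, on $\cN$ one has $J'(u)[u]=0$, hence
\[
  J(u) = J(u) - \tfrac12 J'(u)[u] = \int_{\R^3}\!\left(\tfrac12 f(x,u)\cdot u - F(x,u)\right)dx,
\]
and by (F9), (F14) this integrand dominates $c\min(|u|^p,|u|^q)$. Combined with $J'(u_n)\to 0$ and a Sobolev--H\"older estimate on $\int_{\R^3} V|u_n|^2\,dx$ coming from (V), this controls $\|u_n\|_{\curl,p,q}$.

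With $(u_n)$ bounded, Theorem~\ref{ThMainSplitting} provides $\bar u_0\in\D(\curl,p,q)$, translates $x_n^i\in\Z^3$, and profiles $\bar u_i\in\cN_0$ for $1\le i<N+1$ satisfying \eqref{EqThMainSplitting1}, \eqref{EqThMainSplitting2}, and \eqref{eqInfinitesplitting}. To promote the profiles to critical points I would use two observations. First, the weak-to-weak$^*$ continuity of $J'$ on $\cN$, recorded in \cite[Corollary~5.3]{MederskiENZ} as a consequence of Theorem~\ref{ThMainSplitting}, together with $J'(u_n)\to 0$, forces $J'(\bar u_0)=0$. Second, for $i\ge 1$, I would test $J'(u_n)$ against $\phi(\cdot-x_n^i)$ for $\phi\in\cC_0^\infty(\R^3,\R^3)$ and change variable $y=x-x_n^i$. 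The nonlinear term transforms cleanly since $x_n^i\in\Z^3$ and $F$ is $\Z^3$-periodic by (F13), yielding $\int f(y,\bar u_i)\cdot\phi(y)\,dy$ in the limit; the linear potential term $\int V(y+x_n^i)\,u_n(y+x_n^i)\cdot\phi(y)\,dy$ vanishes because $|x_n^i|\to\infty$ and (V) puts $V$ in $L^{p/(p-2)}\cap L^{q/(q-2)}$, so $V(\cdot+x_n^i)\to 0$ on every compact set. This gives $J_0'(\bar u_i)=0$, confirming each $\bar u_i$ is a critical point of $J_0$.

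For the energy identity, define $K(u):=\int_{\R^3}(\tfrac12 f(x,u)\cdot u - F(x,u))\,dx$, so that $J=K$ on $\cN$ and $J_0=K_0$ on $\cN_0$. A Brezis--Lieb type argument based on the strong $L^{p,q}$-splitting \eqref{EqThMainSplitting2} and the subcritical growth (F14) extends the additivity of \eqref{eqInfinitesplitting} from $I$ to $K$, producing $c=\lim K(u_n)=K(\bar u_0)+\sum_{i=1}^N K_0(\bar u_i)$. Since the profiles are critical points, $K(\bar u_0)=J(\bar u_0)$ and $K_0(\bar u_i)=J_0(\bar u_i)$, giving the decomposition in the statement. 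Finiteness of $N$ follows because $\cN_0$ is bounded away from $0$ in $L^{p,q}$ (by (F2), (F14)) and $J_0$ is bounded below on $\cN_0$ by its positive ground-state level $c_{\cN_0}>0$, so $J_0(\bar u_i)\ge c_{\cN_0}$ uniformly and the convergent sum forces $N\le (c-J(\bar u_0))/c_{\cN_0}<\infty$. The main obstacle is the identification step: one must carefully pass to the limit in the translated test equations for both the nonlinearity and the non-periodic potential, and this is precisely where the interplay between the periodicity in (F13) and the integrability hypothesis (V) is essential.
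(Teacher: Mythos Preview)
Your proposal is correct and follows exactly the route the paper indicates: the paper does not give a self-contained proof here but builds on Theorem~\ref{ThMainSplitting} and defers to \cite{MederskiENZ} for details, and your sketch---boundedness on $\cN$ via $J(u)=\int(\tfrac12 f\!\cdot\!u-F)$, the profile decomposition, identification of the translated limits $\bar u_i$ as critical points of $J_0$ using (F13) and the decay of $V(\cdot+x_n^i)$ from (V), a Brezis--Lieb argument for the energy splitting, and finiteness from $J_0|_{\cN_0}\ge c_{\cN_0}>0$---is precisely that argument. There is nothing further to compare.
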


Now, observe that if $0<c<\inf_{\cN_0} J_0$ then $N=0$, $J(\bar{u}_0)=c$ and $\bar{u}_0$ is a nontrivial critical point of $J$. In this way the comparison of energy levels will imply the existence of nontrivial solutions. See \cite{MederskiENZ} for detailed proof of Theorem \ref{ThMainNonsym}.

\section{Open problems}\label{sec:problems}

{\bf Problem 1.} What can one say about the symmetry of the ground state solution if (S) holds? We conjecture that if $\Om$ is a ball, then the ground state solution $u\in H_0(\curl;\Om)\cap L^p(\Om,\R^3)$ of
\[
  \curlop\curlop u + u = |u|^{p-2}u
\]
with $2<p<6$ is symmetric in the sense of \eqref{eq:sym1}. For other domains, e.g.\ $\Om=\{x\in\R^3:1<x_1^2+x_2^2<2,\,0<x_3<1\}$, a symmetry breaking might occur depending on $p$. It would also be very interesting to find criteria on $\Om$, $V(x)$, $F(x,u)$ satisfying (S) so that the general non-autonomous problem \eqref{eq:main} has a ground state that is invariant under $G=O(2)\times\{1\}$, or has the form \eqref{eq:sym1} if $F$ is even in $u$, or has the form \eqref{eq:sym2}. Of course the problem is also very interesting for unbounded domains and for critical nonlinearities. \\

{\bf Problem 2.} Suppose that $V=\la\id_{3\times 3}$ with $\la\in\R$, $\mu=\id_{3\times 3}$  and $f(x,u)=|u|^{p-2}u$ with $2<p<6$. In view of Theorem \ref{thm:main2} there is a ground state solution for $\la\ge 0$ on a Lipschitz domain with $\cC^{1,1}$ boundary. If in addition $\Om$ is cylindrically symmetric, i.e.\ (S) holds, then there are solutions of the form \eqref{eq:sym1} for all $\la\in\R$; see Theorem \ref{thm:sym1-superlin}. It is an open problem to show the existence of solutions for $\la<0$ and nonsymmetric domain $\Om$. Setting
\[
\la_0= \mu(\Om)^{-\frac{p-2}{p}}p^{-\frac{2}{p}}\inf_{v\in \V:\; |v|_{p}=1}\int_{\Om} |\nabla\times u|^2\; dx>0,
\]
we observe that $J$ has the linking geometry for $\la>-\la_0$. There exist $u^+\in SX^+$ and $R>r>0$ such that
\begin{equation}\label{eq:LinkingGeometry}
\sup_{\pa M(u^+)} J\leq 0=J(0)<\inf_{S^+_r}J
\end{equation}
where
\begin{eqnarray*}
\pa M(u^+)&:=&\{u=t u^++\tu \in X: \tu \in \tX,\ (\|u\|=R,t\geq 0)\textnormal{ or }(\|u\|\leq R,t=0) \},\\
S^+_r&:=&\{ u^+\in X^+,\|u^+\|=r\}.
\end{eqnarray*}
However we are not able to apply any linking result or Nehari-Pankov manifold technique to find critical points of $J$. Moreover, even if (S) holds and $\la<0$ we do not know whether there is a least energy solution.\\

{\bf Problem 3.} Solutions of \eqref{eq:main} lead to semi-trivial solutions $(E,0)$, $(0,E)$ or diagonal solutions $(E,E)$ of the system \eqref{eq:system}. Are there other solutions of the system? For $V=\lambda\id_{3\times 3}$ with $\la>0$ there are families of solutions $(E_\la,0)$, $(0,E_\la)$, $(E_\la,E_\la)$. Is there bifurcation from these solutions as in \cite{Bartsch-Dancer-Wang:2010}? \\

{\bf Problem 4.} In this survey only subcritical nonlinearities have been treated. We are only aware of the two preprints \cite{MederskiENZ, Zeng:2016} dealing with critical nonlinearities. In order to deal with critical nonlinearities it would be very useful to find ``soliton" solutions $u:\R^3\to\R^3$ of $\curlop\curlop u=|u|^4u$. We conjecture that these have cylindrical symmetry and are of the form \eqref{eq:sym1}.\\

{\bf Problem 5.} For natural materials the permittivity $\eps$ is positive and bounded away from $0$, hence $\essinf_{x\in\Om} V>0$. If $\Om=\R^3$ there are so far no results dealing with this case. The problem is very difficult because then $0$ lies in the essential spectrum of the operator $\curlop\curlop-V(x)$, even on the space of divergence-free fields.

{\bf Problem 6.} Another very challenging problem is to consider the wave equation \eqref{eq:wave}
\[
  \eps(x)\pa_t^2\cE + \pa_t^2\cP_{NL}(x,\cE) + \curlop (\mu(x)^{-1}\curlop \cE) = 0
\]
without assuming that the scalar susceptibility $\chi$ depends only on the intensity $|E_1|^2+|E_2|^2$ of the time-harmonic solution
\[
  \cE(x,t) = E_1(x)\cos(\om t) + E_2(x)\sin(\om t)\quad\text{for $x\in\Om$ and $t\in\R$.}
\]
The nonlinear polarization is of the form $\cP(x,\cE)=\chi(\cE)\cE$ so \eqref{eq:wave} does not lead to an elliptic equation for $E_1,E_2$. It is completely unclear how to obtain solutions of \eqref{eq:wave}, say for $\cP(x,\cE)=|\cE|^2\cE$.

{\sc Address of the authors:}\\[1em]
\parbox{8cm}{
	\vspace{-27mm}
	Thomas Bartsch\\
 Mathematisches Institut\\
 Universit\"at Giessen\\
 Arndtstr.\ 2\\
 35392 Giessen, Germany\\
 Thomas.Bartsch@math.uni-giessen.de}
\parbox{10cm}{
\vspace{2mm}
 Jaros\l aw Mederski\\
 Institute of Mathematics,
 \newline\indent
 Polish Academy of Sciences
 \newline\indent
 ul. \'Sniadeckich 8, 00-956
 Warszawa, Poland\\
 and\\
 Nicolaus Copernicus University, \\
 Faculty of Mathematics and Computer Science\\
 ul.\ Chopina 12/18\\
 87-100 Toru\'n, Poland\\
 jmederski@impan.pl\\
 }

\end{document}